\newcommand{\Z}{\mathds{Z}}
\newcommand{\Q}{\mathds{Q}}              
\newcommand{\R}{\mathds{R}}
\newcommand{\CP}{\mathds{C}\mathrm{P}}
\newcommand{\N}{\mathds{N}}
\newcommand{\CH}{\mathds{C}\mathrm{H}}
\newcommand{\Ric}{\mathrm{Ric}}
\newcommand{\C}{\mathds{C}}            
\newcommand{\de}{\partial}          
\newcommand{\K}{K\"{a}hler}
\newcommand{\F}{F_\epsilon}
\def\b{\beta}
\def\b1{{\rm id}}
\newtheorem{theor}{Theorem}[section]
\newtheorem{prop}[theor]{Proposition}
\newtheorem{defin}{Definition}
\newtheorem{lem}[theor]{Lemma}
\newtheorem{cor}[theor]{Corollary}
\newtheorem{example}{Example}
\newtheorem{rmk}{Remark}
\begin{document}

\title[Extremal K\"{a}hler metrics induced by  complex space forms]{Extremal K\"{a}hler metrics induced by
finite or infinite dimensional complex space forms}

\author{Andrea Loi}
\address{(Andrea Loi) Dipartimento di Matematica \\
         Universit\`a di Cagliari, Via Ospedale 72, 09124  (Italy)}
         \email{loi@unica.it}

\author{Filippo Salis}
\address{(Filippo Salis) Istituto Nazionale di Alta Matematica, Politecnico di Torino,
Corso Duca degli Abruzzi 24, 10129 Torino (Italy)}
\email{filippo.salis@gmail.com}

\author{Fabio Zuddas}
\address{(Fabio Zuddas) Dipartimento di Matematica \\
         Universit\`a di Cagliari, Via Ospedale 72, 09124  (Italy)}
         \email{ fabio.zuddas@unica.it}

\thanks{
The first and the third authors were supported  by Prin 2015 -- Real and Complex Manifolds; Geometry, Topology and Harmonic Analysis -- Italy,  by STAGE - Funded by Fondazione di Sardegna and by KASBA- Funded by Regione Autonoma della Sardegna. The second author was a research fellow of INdAM.\\
All the three authors were supported by INdAM GNSAGA - Gruppo Nazionale per le Strutture Algebriche, Geometriche e le loro Applicazioni.
}

\subjclass[2000]{53C55, 32Q15, 32T15.} 
\keywords{\K\ \ metric, extremal metric; constant scalar curvature metric;  Calabi's diastasis function; complex space forms}

\begin{abstract}
In this paper we address the problem of studying those complex manifolds $M$ equipped with   extremal metrics $g$ induced by finite or infinite dimensional complex space forms. 
We prove that when $g$ is  assumed to be  radial and the ambient space is  finite dimensional 
then $(M, g)$ is itself a complex space form. 
We  extend this result to the  infinite dimensional setting  by imposing  the strongest assumption that  the metric $g$ 
has  constant scalar curvature and  is  {\em well-behaved} (see Definition \ref{WB} in the Introduction).
 Finally, we analyze the radial   \K-Einstein metrics   induced by infinite dimensional elliptic   complex space forms and we show that if such a  metric is assumed to satisfy a stability condition then it is forced to have constant non-positive holomorphic sectional curvature.  
\end{abstract}
 
\maketitle

\tableofcontents  

\section{Introduction}
Extremal \K\ 
metrics  were introduced  by Calabi \cite{calextrem} in
the compact case as the solution for the variational problem in  a
\K\ class defined by the square integral of the scalar curvature.
Therefore they are a generalization of constant scalar curvature (cscK) and hence of  \K-Einstein (KE) 
metrics. Calabi himself constructs nontrivial extremal (namely with
nonconstant scalar curvature) metrics on some compact manifolds.
In the last thirty years extremal \K\ metrics   were rediscovered  by several  mathematicians due to
their link with the  stability of complex vector bundles (see e.g.
 \cite{burns}, \cite{chtian},    \cite{levin}, \cite{ma} and also the 
 introductory book \cite{SZGA}). 
 The reader is also referred to the recent papers  \cite{blow1}, \cite{arpac}, \cite{blow2} and 
 \cite{blow3} 
for the existence of extremal metrics via blowup constructions.
 
Obviously extremal metrics  cannot be defined in the noncompact
case as  the solutions of a variational problem involving some
integral on the manifold. Nevertheless
they can be  alternatively defined  as those  metrics 
such that the (1,0)-part of the
Hamiltonian vector field associated to the scalar curvature  is
holomorphic.
In the
noncompact case, the existence and uniqueness of  such    metrics
are far from being  understood. For example, in
\cite{cheng} (see also \cite{cheng2}), there has been shown the
existence of a nontrivial extremal   and complete  \K\ metric
 in a complex one-dimensional manifold.
More recently M. Abreu \cite{abreuC1} inspired by the work of Calabi \cite{calextrem} considered cohomogeneity one examples 
of extremal metrics on noncompact manifolds.

In this paper we address the issue of classifying  those (finite dimensional) complex manifolds $M$ admitting an   extremal metric $g$ induced
by  a finite
or infinite dimensional complex space form $(S^{N}, g^{N}_c)$ of constant holomorphic sectional curvature $c$ and complex dimension $N\leq\infty$.
By the word \lq\lq induced'' we mean that the \K\ manifold  $(M, g)$  can be  \K\  immersed  into $(S^{N}, g^{N}_c)$, i.e. there exists a holomorphic map $\varphi :M\rightarrow S^{N}$ such that 
$\varphi^*g^{N}_c=g$ (see \cite{calabi} or the book \cite{LoiZedda-book} for an update material on the subject).

If one assumes that   $(S^{N}, g^{N}_c)$ is  complete and simply-connected one has the corresponding three cases, depending on the sign of $c$:

- for $c=0$, $S^N=\C^N$ ($S^\infty=\ell^2(\C)$) and  $g^N_0$ is the flat metric with   associated \K\ form
\begin{equation}\label{omega0} 
\omega_{0}=\frac{i}{2}\partial\bar\partial |z|^2,\  |z|^2=\sum_{j=1}^N|z_j|^2,\ N\leq\infty;
\end{equation}

- for $c<0$, $S^N=\C H^N$ is  the $N$-dimensional complex hyperbolic  space, namely the unit ball of $\C^N$ with the metric $g_c^N$ with associated  \K\ form 
 \begin{equation}\label{omegahyp} 
\omega_c=\frac{i}{2c}\partial\bar\partial\log (1-|z|^2);
\end{equation}

- for $c>0$, $S^N=\C P^N$ is  the $N$-dimensional complex projective space 
 and $g^N_c$  is the  metric with associated  \K\ form  $\omega_c$, 
 given in homogeneous coordinates by:
 \begin{equation}\label{omegaproj} 
\omega_c=\frac{i}{2c}\partial\bar\partial\log (|Z_0|^2 +\cdots +|Z_N|^2).
\end{equation}
 Notice that when $c=1$ (resp. $c=-1$) the metric 
$g_c^N$ is the standard Fubini-Study metric $g_{FS}$ (respectively hyperbolic metric $g_{hyp}$) of 
holomorphic sectional curvature $4$ (resp. $-4$).
Throughout the paper we will say  that a metric $g$ on a complex (connected) manifold is {\em projectively induced} if $(M, g)$ admits a \K\ immersion
into $(\CP^{N}, g_{FS})$. We say $g$ is  finitely (resp. infinitely) projectively induced if $N<\infty$ (resp. $N=\infty$).

We 
believe that the extremal metrics
induced by a {\em finite} dimensional complex form  are forced to have constant holomorphic sectional curvature
as expressed by the following:

\vskip 0.3cm

\noindent
{\bf Conjecture 1:}
{\em Let $g$ be an extremal metric on an $n$-dimensional complex manifold $M$ induced 
by a finite dimensional complex space form  of constant holomorphic sectional curvature $c$.
Then the following facts hold true:

\noindent
(i) if $c\leq 0$ then  $(M, g)$ is a complex space form of holomorphic sectional curvature $c$ and the   immersion is totally geodesic.

\noindent
(ii) if $c>0$, then $M$  an open subset of a flag manifold\footnote{A flag manifold $(F, g)$ is a compact simply-connected \K\ manifold acted upon transitively by its holomorphic isometries group.} 
$(F, h)$ and $g=h_{|_{M}}$}

\vskip 0.3cm

A possible way to attack (i)  of  Conjecture 1  could be through the following  steps: 
extremal $\rightarrow$ cscK, cscK $\rightarrow$ KE and finally to appeal to   a fundamental result of  M. Umehara \cite{UmearaE}
asserting  that a \K\ immersion of a  KE manifold  into  a finite dimensional  complex space form 
of non positive holomorphic sectional curvature is totally geodesic.
Unfortunately at the moment we are unable to prove any of the two implications.
For part (ii) of Conjecture 1 one 
should try to  show the following  three facts:
extremal $\rightarrow$ cscK, cscK $\rightarrow$ KE and 
KE $\rightarrow$ $h$.
Regarding the step cscK $\rightarrow$ KE
a partial result   for projectively induced metrics 
was obtained by S. Kobayashi \cite{kob} (see also the work of  S.S. Chern \cite{ch} for the case of codimension one immersions)
which shows that  when $M$ is  
a complete intersection in the complex projective space $\C P^N$ and  the restriction of the Fubini--Study metric to $M$ is  cscK then it is  KE (and hence $M$  is either the quadric of $\C P^N$ 
or it is  totally geodesic  by a fundamental result of J. Hano \cite{HANO}).

The proof of the  step  KE $\rightarrow$ $h$  represents an important breakthrough in the classification of finite projectively induced KE metrics.
The only known facts in this direction  are the extension of the above mentioned Chern's result to the codimension $2$ case due to  K. Tsukada \cite{ts}
and the proof of the  positivity of the Einstein constant of a compact KE submanifold of the complex projective space due to D. Hulin \cite{HU} (see also  \cite{SALIS} for the case of rotation invariant metrics  in codimension $3$).

 \subsection{Statements of the main results}
 In this paper we verify  Conjecture 1   under the additional  assumption that the metrics  involved  are 
 {\em  radial} \K\  metrics, i.e. they admit a global K\"ahler potential  $\Phi:M\rightarrow \R$ which depends only on the sum $|z|^2 = |z_1|^2 + \cdots + |z_n|^2$ of the local coordinates' moduli. 
Since $M$ is assumed to be connected this means that there exists a smooth function $f: (r_{\inf}, r_{\sup})\rightarrow \R$, 
 $0\leq r_{\inf}<r_{\sup}\leq\infty$, such that 
 $\Phi (z)=f(r)$ and 
 \begin{equation}\label{omegar}
 \omega =\frac{i}{2} \partial \bar \partial f(r), \ r=|z|^2.
 \end{equation}
 
 The prototype of radial \K\ metrics in  complex dimension $n$  are   the flat metric $g_0$ on $\C^n$, the hyperbolic metric $g_{hyp}$ on $\CH^n$ and the  Fubini-Study metric
 $g_{FS}$ on the affine chart $U_0=\{Z_0\neq 0\}$ with complex coordinates $z_j=\frac{Z_j}{Z_0}$, $j=1, \dots n$.
 
Our first result is then   the following:

\begin{theor}\label{mainteor}
Let $g$ be a radial extremal metric on  a  $n$-dimensional complex manifold $M$.
Assume that $(M, g)$ can be \K\ immersed into a finite dimensional  complex space form 
$(S^N, g^N_c)$. Then 

\noindent
(1) If $c\leq 0$ then  $(M, g)$ is a complex space form of holomorphic sectional curvature $c$ and the   immersion is totally geodesic.

\noindent
(2) If $c>0$ then $M$ is an open subset of $\C P^n$,  $g$ is an integer multiple of $g^n_c$, 
i.e. $g=mg_c^n$, $m\in \N^+$.
\end{theor}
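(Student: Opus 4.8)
The plan is to reduce extremality to an ODE, read off the algebraic shape of the potential from Calabi's criterion, and then match the two.

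\emph{Step 1 (Extremality as an ODE).} Write $r=|z|^2$ and $\om=\frac i2\partial\bar\partial f(r)$, so that $g_{i\bar j}=f'\delta_{ij}+f''\bar z_i z_j$ and $\det(g_{i\bar j})=(f')^{n-1}\mu$ with $\mu:=(rf')'=f'+rf''$. A direct computation shows that for any radial function $u(r)$ the $(1,0)$-gradient is a multiple of the Euler field $E=\sum_i z_i\frac{\partial}{\partial z_i}$; in particular $\mathrm{grad}^{1,0}\scal=\frac{\scal'(r)}{\mu}E$. Since $E$ is holomorphic, $\mathrm{grad}^{1,0}\scal$ is holomorphic if and only if $\scal'(r)/\mu$ is constant. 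Thus, setting the momentum $x:=rf'(r)$ (so that $x'=\mu$), the metric is extremal iff
\[
\scal(r)=a\,x+b=a\,rf'(r)+b,\qquad a,b\in\R .
\]
This is the radial incarnation of Abreu's statement that extremal is equivalent to the scalar curvature being affine in the momenta. I will also record $\scal=-\Delta_g\log\det(g_{i\bar j})$, where for radial $u$ one has $\Delta_g u=\frac r\mu u''+\big(\frac n{f'}-\frac{rf''}{f'\mu}\big)u'$; this exhibits $\scal(r)$ as an explicit rational expression in $f',f'',\dots$

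\emph{Step 2 (Shape of the potential from the ambient).} The diastasis of a radial metric is radial, and centered at the origin it equals $D(r)=f(r)-f(0)$. By Calabi's criterion a \K\ immersion into $(S^N,g^N_c)$ exists iff the $c$-deformed diastasis—$D$ itself when $c=0$, and $\frac1c(e^{cD}-1)$ when $c\neq0$—is a sum of squared moduli of holomorphic functions; finite $N$ means finitely many such functions. Because $D$ is $U(n)$-invariant, this forces the following. For $c=0$, $D(r)=\sum_{k=1}^d b_k r^k$ is a polynomial with $b_k\ge0$. For $c\neq0$, $F(r):=e^{cD(r)}$ is a polynomial with $F(0)=1$, and $F-1$ (if $c>0$) resp. $1-F$ (if $c<0$) has nonnegative coefficients.

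\emph{Step 3 (Matching).} Now I substitute the algebraic form of Step 2 into the extremality identity of Step 1. When $c\neq0$ one has $f=f(0)+\frac1c\log F$, hence $rf'=\frac{rF'}{cF}$, and $\scal$ becomes an explicit rational function of $r$ and $F$; the identity $\scal=a\,rf'+b$ then becomes a polynomial identity after clearing denominators. The heart of the matter is to show that this identity forces $F$ to be the $m$-th power of a linear polynomial, $F(r)=(1+cr)^m$ for some $m\in\N^+$ (equivalently $a=0$, so $g$ is cscK, and $D$ is $m$ times the diastasis of $g^n_c$). Finally the positivity constraint of Step 2 selects the admissible $m$: for $c>0$ all coefficients of $(1+cr)^m-1$ are positive, so every $m\ge1$ occurs and $g=mg^n_c$ is the degree-$m$ Veronese image, giving (2); for $c<0$ the coefficients of $1-(1+cr)^m$ alternate in sign once $m\ge2$, violating Calabi's criterion, so $m=1$, $g=g^n_c$, and the immersion is totally geodesic; for $c=0$ linearity $D=b_1 r$ likewise gives a flat, totally geodesic metric. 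In either nonpositive case $(M,g)$ is a complex space form of curvature $c$ with totally geodesic immersion, giving (1).

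\emph{Main obstacle.} The crux is Step 3: reconciling the transcendental relation $f=f(0)+\frac1c\log F$ with the extremality ODE and the (unwieldy, rational) scalar-curvature formula, so as to rule out every polynomial $F$ except pure powers of a linear factor—this is exactly what excludes genuinely non-cscK radial extremal metrics as well as cscK radial metrics that are not space-form powers. A secondary technical point is the case $0\notin M$ (an annular radial range), where one must argue by analytic continuation that finiteness of $N$ still forces the polynomial shape, and, in case (2), that $M$ indeed fills out an open subset of $\CP^n$.
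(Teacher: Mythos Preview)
Your Step~3 is not a proof but a wish list: you state that ``this identity forces $F$ to be the $m$-th power of a linear polynomial'' and then immediately flag this as the ``main obstacle.''  But that is the entire content of the theorem.  Knowing that $F(r)=e^{cD(r)}$ is a polynomial (Step~2) and that $\scal = a\,rf'+b$ (Step~1) does not by itself produce the conclusion; you still have to show that no polynomial $F$ other than $(1+cr)^m$ can satisfy the extremality ODE, and you have given no argument for this.  Concretely, writing $y=rF'/(cF)$, the equation $\psi(y)=y-\tfrac{A}{y^{n-1}}-\tfrac{B}{y^{n-2}}-Cy^2-Dy^3$ becomes a highly non-linear constraint on the coefficients of $F$, and eliminating all non-space-form solutions requires real work.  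Your proposal stops precisely where the difficulty begins.

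The paper proceeds quite differently.  Rather than attempting to solve for $F$ directly, it introduces rational functions $Q_k^\epsilon(y)$ defined by the recursion $Q_1^\epsilon=y$, $Q_{k+1}^\epsilon=(\epsilon y-k)Q_k^\epsilon+\psi\,\tfrac{dQ_k^\epsilon}{dy}$, and shows that finite-rank $\epsilon$-resolvability forces $Q_I^\epsilon\equiv 0$ for some $I$ (when $n\ge 2$).  It then computes the leading and lowest-order coefficients of $Q_I^\epsilon$ explicitly in terms of $A,B,C,D$ (Lemma~\ref{lemlem}) and reads off $A=D=0$, then $B=0$, directly from their vanishing.  The case $n=2$, $\epsilon=1$ needs a further argument using the minimality of $I$ and the location of the roots of $\psi$.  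For $n=1$ the analogous determinant condition is used instead.  This bypasses entirely the question of what $F$ looks like as a polynomial in $r$.

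Your ``secondary technical point'' is also not secondary.  Radial extremal metrics with $r_{\inf}>0$ genuinely occur (see Example~\ref{exrinf}), so the diastasis need not be centered at $0$, the expansion in Step~2 is not a priori available, and the analytic-continuation fix you allude to is not automatic.  The paper's $y$-variable framework handles all domains $(r_{\inf},r_{\sup})$ uniformly.
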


\begin{rmk}\rm
The conclusion  (2) of the  theorem can also  be accompained by an explicit description   of the \K\  immersion given by a suitable normalization of the Veronese embedding (see \cite{calabi} for details).
Notice also that (2) is a particular case of (ii) of Conjecture 1 since it is not hard to see that   a  homogeneous \K\  metric $h$ on a flag manifold  $F$
which admits a radial potential (on an open subset of $F$)
can exist only when $F=\C P^n$ and $g$ is a multiple of $g_{FS}$. 
\end{rmk}
%Indeed the immersion is, up to unitary transformations, the restriction to $M$ of the Calabi--Veronese map
%$V_{m, c} :\C P^n\rightarrow \C P^N$, namely  the holomorphic embedding
%see \cite{calabi})  given. by
%\begin{equation}\label{CV}
%V_{m, c}: \C P^n\rightarrow \C P^{N_m},\ 
%[Z]:= [Z_0, \dots ,Z_n]\mapsto 
%[\dots , \sqrt{\frac{|j|!}{j!}}\ Z^{j}, \dots ],
%\end{equation}
%where  $Z^j=Z_0^{j_0}\cdots Z_n^{j_n}$,
%$|j|=j_0+\cdots+j_{n}=k$, $j!=j_0! \cdots j_{n}!$
%and $N_m:= \frac{(n+m)!}{n!m!}-1$.
%(This map is indeed a \K\ embedding, since a direct computation shows  that $V_{m, c}^*g_c^N=mg^n_c$).

It is worth pointing out  that Theorem \ref{mainteor} is of local nature, i.e. there  are no  topological assumptions on the manifold $M$ and the \K\ immersions 
are not required to be injective.

Since an extremal cohomogeneity one toric \K\ metric $g$ on a  compact complex  manifold  $T$ admits a radial \K\ potential on a dense open subset, we get:
\begin{cor}
If $(T, g)$ is finitely projectively induced
then $(M, g)=(\C P^n, g_{FS})$. 
\end{cor}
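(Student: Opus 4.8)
The plan is to deduce the Corollary directly from Theorem~\ref{mainteor}, so the work is essentially to verify that the hypotheses of the theorem hold in the setting of an extremal cohomogeneity one toric \K\ metric on a compact complex manifold $T$, and then to rule out the first alternative of the theorem's conclusion. The key reduction is contained in the sentence just before the Corollary: an extremal cohomogeneity one toric \K\ metric $g$ on $T$ admits a radial \K\ potential on a dense open subset. So first I would isolate the dense open subset $M\subset T$ on which $g$ has such a radial potential, and view $(M,g)$ as a radial \K\ metric in the sense of equation~\eqref{omegar}. Since $(T,g)$ is assumed finitely projectively induced, the restriction of the \K\ immersion to $M$ exhibits $(M,g)$ as \K\ immersed into a finite dimensional complex space form $(S^N,g^N_c)$ with $c>0$ (indeed $g_{FS}$ corresponds to $c=1$). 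Thus both hypotheses of Theorem~\ref{mainteor} are satisfied, with the ambient curvature strictly positive.

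Now I would apply part~(2) of Theorem~\ref{mainteor}, which forces $M$ to be an open subset of $\C P^n$ with $g = m\,g^n_c$ for some positive integer $m$, i.e.\ $g$ is (a multiple of) the Fubini--Study metric on an open piece of $\C P^n$. The remaining step is to upgrade this local conclusion on the dense open set $M$ to the global statement $(T,g)=(\C P^n,g_{FS})$. The natural argument is by analytic continuation and compactness: since $g$ agrees with the real-analytic metric $m\,g_{FS}$ on the dense open subset $M$, and since $T$ is compact (hence $g$ extends as a smooth, in fact real-analytic, \K\ metric across all of $T$), the metric $g$ must coincide with $m\,g_{FS}$ on all of $T$; in particular $T$ carries a globally defined metric locally isometric to $\C P^n$. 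I would then invoke the rigidity of the complex projective space: a compact complex manifold whose metric is (globally) the pullback of the Fubini--Study metric of constant holomorphic sectional curvature is biholomorphically isometric to $(\C P^n,g_{FS})$, which pins down $T$ itself as $\C P^n$.

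The final point I would address is the normalization constant $m$, so that the conclusion reads exactly $(\C P^n,g_{FS})$ and not $(\C P^n, m\,g_{FS})$. Here I expect to use the fact that a cohomogeneity one toric structure together with compactness constrains the scaling: the projective embedding induced by the full linear system of $\OO(1)$ realizes precisely $g_{FS}$ (the Veronese re-embeddings correspond to $m>1$), and I would argue that the cohomogeneity one toric hypothesis is compatible only with the minimal $m=1$, or alternatively absorb the factor $m$ into the statement by noting that $(\C P^n, m\,g_{FS})$ is just $(\C P^n,g_{FS})$ up to the customary normalization of the metric.

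The main obstacle I anticipate is the passage from the dense open subset to the whole of $T$: I must make sure that the radial potential, which a priori exists only on $M$, really gives a \K\ immersion of all of $(T,g)$ into a complex space form compatible with the local identification, and that no exotic behaviour occurs along the complement $T\setminus M$ (the non-principal orbits of the torus action). Establishing that $g$ extends real-analytically and that the isometry type propagates across this lower-dimensional exceptional set — rather than merely the metric agreeing locally — is the delicate part; once that is secured, the identification $T=\C P^n$ and the determination of $g$ follow from the rigidity of the projective space and the structure of Theorem~\ref{mainteor}.
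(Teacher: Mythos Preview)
Your approach matches the paper's, which offers no separate proof and treats the corollary as an immediate consequence of Theorem~\ref{mainteor} together with the observation that a cohomogeneity one toric extremal metric is radial on a dense open set. Your outline of applying part~(2) on that dense set and then propagating to all of $T$ is exactly the intended argument.

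Two comments on the details you flag. First, your attempt to force $m=1$ from the cohomogeneity one hypothesis is mistaken: $(\C P^n, m\,g_{FS})$ is homogeneous (hence a fortiori cohomogeneity one) for every $m\in\N^+$, so nothing in the toric structure singles out $m=1$. Your alternative---absorbing $m$ into the normalization---is the correct reading; the corollary should be understood up to this scaling, consistent with the conclusion of Theorem~\ref{mainteor}(2). Second, the passage from the dense open $M$ to all of $T$ is less delicate than you suggest: Theorem~\ref{mainteor}(2) tells you $g$ has constant holomorphic sectional curvature on $M$, and since this is a pointwise (and closed) condition on a real-analytic metric, it extends by continuity across the lower-dimensional complement $T\setminus M$. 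A compact toric manifold is simply connected, so the standard classification of complete simply-connected space forms identifies $(T,g)$ with $(\C P^n, m\,g_{FS})$; there is no need to worry about the \K\ immersion itself extending.
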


Unfortunately without any further assumptions 
Theorem \ref{mainteor} does not   extend 
to  the infinite dimensional setting.
Indeed there exist (see Example \ref{exfond}  in Subsection \ref{Examples} below )  extremal (not cscK) radial metrics  which can be \K\ immersed into 
any infinite dimensional complex space form.

Even when dealing with the strongest assumption  of cscK metrics 
one can exhibit examples of  infinitely projectively induced cscK (not KE) metrics (see Example \ref{simanca} and Example \ref{partbal}  in Subsection \ref{Examples}).

By analyzing these last  examples one discovers two facts:  a) such a metric  cannot be \K\ immersed into any infinite dimensional complex space form of non positive
holomorphic sectional curvature and  b) they do not satisfy the following  definition of fundamental importance for our analysis.

\begin{defin}\label{WB}\rm
A radial   \K\ metric $g$ with radial potential $f:(r_{\inf}, r_{\sup})\rightarrow \R$
 is said to be  {\em well-behaved}\footnote{
Clearly  if a radial metric $g$ is defined at  $r_{\inf}=0$  then it is well-behaved and
in particular, $g_0$, $g_{hyp}$ and $g_{FS}$ (the latter on the affine charts $U_0=\{Z_0\neq 0\}$)  are well-behaved and $r_{\inf}=0$.} 
if $rf'(r)\rightarrow 0$ for $r\rightarrow r_{\inf}^+$. 
\end{defin}

In the following theorem which represents our second result we show that fact a) is true for any cscK metric which is not of constant holomorphic sectional 
curvature  and that well-behaveness is indeed the right condition  to  impose in order for (2) of  Theorem \ref{mainteor} to extend to the infinite dimensional setting.

\begin{theor}\label{mainteor2}
Let $g$ be a radial cscK metric on a complex manifold $M$. Assume that $(M, g)$ can be \K\ immersed into an infinite dimensional complex space forms $(S^\infty, g^\infty_c)$. 
Then 
 
\noindent
(1) If $c\leq 0$ then  $(M, g)$ is a complex space form of non positive holomorphic sectional curvature.

\noindent
(2) If $c>0$ and $g$ is well-behaved  then either  $(M, g)$ is a complex space form of non positive holomorphic sectional curvature or
$M$ is   an open subset of 
$\C P^n$ and    $g=mg_c^n$, $m\in \N^+$.
\end{theor}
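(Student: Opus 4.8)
The plan is to translate the immersion hypothesis into a sign condition on a single power series via Calabi's diastasis, and then to confront that condition with the constant–scalar–curvature equation. For a radial potential $f(r)$ the holomorphic continuation of $\Phi(z)=f(|z|^2)$ is $f(\langle z,w\rangle)$ with $\langle z,w\rangle=\sum_j z_j\bar w_j$, so the diastasis centred at the origin is again radial, $D_0(z)=f(r)-f(0)$, $r=|z|^2$. By Calabi's criterion $(M,g)$ admits a \K\ immersion into $(S^\infty_c,g^\infty_c)$ if and only if the Calabi function $T_c:=\mathrm{sign}(c)\,(e^{cD_0}-1)$ (with $T_0:=D_0$ when $c=0$) is a sum $\sum_j|\varphi_j|^2$ of squared moduli of holomorphic functions; since $T_c$ is radial this is in turn equivalent to the nonnegativity of every Taylor coefficient of $T_c$ viewed as a power series in $r$. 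First I would set up this reduction carefully, observing that well-behavedness ($rf'\to0$ as $r\to r_{\inf}^+$) is exactly what forbids a $\log r$–type singularity of $f$ at the origin and therefore guarantees that $D_0$ is a genuine power series in $r$, so that the coefficient analysis is legitimate even when $r_{\inf}>0$.

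Next I would turn the cscK condition into an ODE. From $g_{i\bar\jmath}=f'\delta_{ij}+f''\bar z_i z_j$ one gets $\det(g_{i\bar\jmath})=(f')^{n-1}(f'+rf'')$, and for a radial function $u$ the complex Laplacian is $\Delta_g u=u'\bigl(\tfrac{n-1}{f'}+\tfrac1{f'+rf''}\bigr)+u''\,\tfrac{r}{f'+rf''}$, so that the requirement that $\scal=-\Delta_g\log\det(g_{i\bar\jmath})$ be constant becomes an explicit ODE for $f$. With the substitution $\Theta=rf'$ (so $\det(g_{i\bar\jmath})=(\Theta/r)^{n-1}\,d\Theta/dr$) and the change of variable $x=\log r$ this equation lowers its order and can be integrated. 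It is convenient to work throughout with $G:=e^{cf}$: then $T_c=\mathrm{sign}(c)\bigl(G/G(0)-1\bigr)$, the space–form profiles are precisely $G=(1+c'r)^{c/c'}$, and the cscK relation becomes a differential identity for $G$ whose solutions I can expand as $G=\sum_k b_k r^k$.

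The heart of the argument is to intersect the solution set of this ODE with the sign condition of the first paragraph, splitting on whether $T_c$ is a polynomial. If $T_c$ is a polynomial then $(M,g)$ already immerses into a \emph{finite}–dimensional $S^N_c$, and Theorem~\ref{mainteor} applies directly: for $c\le0$ it yields a totally geodesic—hence non-positively curved—complex space form, and for $c>0$ it yields $M\subset\C P^n$ with $g=mg^n_c$, which is the second alternative of part (2). If instead $T_c$ is a genuine infinite series, I would show that the cscK identity forces $G$ into the space–form family $(1+c'r)^{c/c'}$: the integrated ODE produces, beside this family, only correction terms that either introduce a $\log r$ (excluded by well-behavedness) or render some coefficient $b_k$ of $T_c$ negative, contradicting immersibility. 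Finally the sign of $c$ fixes the curvature, since the binomial coefficients $\binom{c/c'}{k}(c')^k$ of $(1+c'r)^{c/c'}$ obey the sign pattern required by $T_c$ only when $c'\le0$; the surviving space form is therefore non-positively curved. This gives part (1) for $c\le0$ (where the $\C P^n$ branch is unavailable for sign reasons) and completes the dichotomy of part (2).

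The main obstacle is the infinite–series case: proving that among all radial cscK profiles the only ones whose Calabi function has all Taylor coefficients of the correct sign are the space–form profiles $(1+c'r)^{c/c'}$. I expect this to require either an explicit solution of the reduced ODE followed by a delicate sign analysis of the ensuing coefficient recursion, or a perturbative argument showing that any admissible deviation from $(1+c'r)^{c/c'}$ eventually forces a coefficient of the wrong sign. A secondary but essential point, needed when $r_{\inf}>0$, is to justify the origin-centred diastasis in the first place: for $c>0$ this is exactly where well-behavedness enters, whereas for $c\le0$ one must verify that the sign obstruction survives without that hypothesis, i.e.\ that the non-well-behaved cscK examples of the Introduction genuinely fail to immerse into any non-positively curved $S^\infty_c$.
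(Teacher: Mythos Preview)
Your plan has a structural gap at the very first step. You want to centre the diastasis at the origin and work with the Taylor coefficients of $T_c$ in $r$, but the origin need not lie in $M$: the radial potential $f$ is defined only on $(r_{\inf},r_{\sup})$, and well-behavedness says merely that $rf'(r)\to 0$ as $r\to r_{\inf}^+$, not that $r_{\inf}=0$ or that $f$ extends analytically across $r=0$. Example~\ref{exrinf} in the paper gives a well-behaved radial KE metric with $r_{\inf}>0$, so your assertion that well-behavedness ``guarantees that $D_0$ is a genuine power series in $r$'' is false as stated. Without a centre at $r=0$ the diastasis at a generic $p$ is no longer a one-variable series in $|z|^2$, and your coefficient analysis loses its footing. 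This also undercuts your handling of part~(1): you observe correctly that for $c\le 0$ the argument must go through \emph{without} well-behavedness, but you offer no mechanism for it.

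The paper avoids both difficulties by never centring at the origin. It first uses Lemma~\ref{lemmasimple} to put every radial extremal (in particular cscK) metric in the normal form $\psi(y)=y-\tfrac{A}{y^{n-1}}-\tfrac{B}{y^{n-2}}-Cy^2-Dy^3$, with cscK equivalent to $D=0$; the problem then reduces to proving $A=B=0$. The immersibility condition is encoded not through Taylor coefficients at $r=0$ but through the signs of the $r$-derivatives of $F_\epsilon$ at an arbitrary point, which in turn are governed by a recursively defined family of rational functions $Q_k^\epsilon(y)$ on $(y_{\inf},y_{\sup})$. For $c>0$ with $g$ well-behaved one has $y_{\inf}=0$, and the explicit lowest-order terms of $Q_2^1$ and $Q_3^1$ force $A=B=0$ directly (Proposition~\ref{lemAB0}). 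For $c\le 0$ the crucial extra idea---which your outline lacks---is that non-well-behavedness would give $y_{\inf}>0$ and hence $\psi(y_{\inf})=0$ by Lemma~\ref{ANAL}, whence $Q_2^\epsilon(y_{\inf})=y_{\inf}(\epsilon y_{\inf}-1)<0$, contradicting $\epsilon$-resolvability; thus $g$ is automatically well-behaved and one reduces to the previous case (Proposition~\ref{lemAB01}). With $A=B=D=0$, part~(c) of Lemma~\ref{lemmasimple} finishes. Your ``main obstacle''---classifying all sign-admissible cscK profiles by solving the ODE and chasing coefficient signs---is thus replaced by a finite check on two or three explicit $Q_k^\epsilon$'s, which is both shorter and works uniformly over all $r_{\inf}\ge 0$.
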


\begin{rmk}\rm\label{remarkx}
In (1)  of  Theorem \ref{mainteor2}  we cannot get  to the conclusion that 
the immersion is totally geodesic as in (1)  of Theorem \ref{mainteor}.
Indeed, beside the natural totally geodesic embeddings  $(\C^n, g_0)\rightarrow (\ell^2(\C), g^\infty_0)$
 and $(\CH^n, g_c^n)\rightarrow (\CH^\infty, g_c^{\infty})$ ($c<0$) there exist  \K\ embeddings of  $(\CH^n, g^n_{c})$ into $(\ell^2(\C), g_0)$, for all $c<0$.
 
 Similar considerations hold true for (2) in Theorem \ref{mainteor2}:
 $(\CH^n, g^n_{c'})$ and  $(\C^n, g_0)$ can be \K\ embedded
 into   $(\CP^\infty, g^\infty_{c})$ for all $c'<0$ and $c>0$.
 (The reader is referred to \cite{calabi} for details).
\end{rmk}

Finally we ask what happens when cscK is strengthened to the KE condition.
In this regards we believe the validity of the following:

\vskip 0.3cm
\noindent
{\bf Conjecture 2:}
{\em  A (not well-behaved\footnote{Otherwise one can conclude by (2) of Theorem \ref{mainteor2}.})  radial KE metric 
induced by   $(\C P^\infty, g_c^{\infty})$ (for some $c>0$) is a complex space form.} 

\vskip 0.3cm

In order for the conjecture to make sense we exhibit  in Example \ref{exKENWB}  of Subsection \ref{Examples}  a radial KE metric which is not 
well-behaved.

Notice   that Conjecture 2  is  false without the radiality assumption. Take, for example, 
 any   open contractible open subset  $U$ of a flag manifold $(F, h)$ different from the complex  projective space 
 such that $h$ is projectively induced (see e.g. \cite{TA78})
 then $(U, h_{|U})$ is a KE manifold which admits a global \K\ potential and $h_{|U}$ is  projective induced. 
Another example is given by  a bounded symmetric domain equipped with its Bergman metric
 or any bounded homogeneous domain with a suitable multiple of a   homogeneous metric (see \cite{AMOSSA}). 
 For an example of  non radial KE metric  which 
 \lq\lq does not come'' from an homogeneous one the reader is  referred to \cite{LZ}  where one can find a continuous family  of complete and nonhomogeneous  KE submanifolds of the infinite dimensional complex projective space (see also \cite{VARI} for further examples).

Notice also  that Conjecture 2 turns out to be true for  Ricci flat  metrics on complex surfaces (as explained in Example \ref{EH} of Subsections \ref{Examples} below).

 In Theorem \ref{mainteor3} we show the validity  of  Conjecture 2  under a natural  stability
assumption which the authors of the present paper have already considered in \cite{LSZ}.

\begin{defin}\label{defst}
Let $c>0$.
 A \K\  metric $g$  is said to be  {\em $c$-stable projectively induced} if there exists $\epsilon >0$ such that $\alpha g$ is induced  by $(\C P^\infty, g_c^\infty)$
for all $\alpha \in (1-\epsilon, 1+\epsilon)$.   A \K\  metric $g$  is said to be unstable if it is not $c$-stable projectively 
induced for any $c>0$. When $c=1$ we simply say that $g$ is {\em stable-projectively induced}.
\end{defin}

The reader is referred to \cite{LSZ} 
for details, examples and further properties of stable projectively induced metrics.
Notice that the Fubini-Study metric $g_{FS}$, and more generally any projectively induced metric on a compact manifold
  is unstable, 
while the flat metric $g_0$ and the hyperbolic metric $g_{hyp}$ 
are $c$-stable projectively induced for all $c>0$ 
due to the last part of Remark  \ref{remarkx}.

\begin{theor}\label{mainteor3}
Let $g$ be a radial KE metric
induced by  $(\C P^\infty, g_c^{\infty})$ for some $c>0$.
If $g$ is $c$-stable projectively induced then $(M, g)$ is a complex space form of non-positive  holomorphic sectional curvature.
\end{theor}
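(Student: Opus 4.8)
The plan is to combine Calabi's criterion for induced metrics with the scaling invariance of the Einstein equation, and to use the stability hypothesis to eliminate the positive-curvature alternative that part (2) of Theorem \ref{mainteor2} leaves open.

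First I would record the two hypotheses in a workable analytic form. Writing the radial potential as $f$ with $r=|z|^2$, the metric is $g_{i\bar j}=f'(r)\delta_{ij}+f''(r)z_j\bar z_i$, whose determinant is $(f')^{n-1}(rf')'$ (the eigenvalues being $f'$ with multiplicity $n-1$ and $rf'+f'$ in the radial direction). Since $\Ric=-i\partial\bar\partial\log\det(g_{i\bar j})$ and $\Ric=\lambda\omega$ with $\omega=\tfrac i2\partial\bar\partial f$, the function $\log\det(g_{i\bar j})+\tfrac\lambda2 f$ is pluriharmonic, hence (being radial) constant, giving the radial \K-Einstein equation
\[
(f')^{n-1}(rf')'=C\,e^{-\frac\lambda2 f},\qquad C>0 .
\]
On the other hand, Calabi's criterion characterizes induction into $(\C P^\infty,g^\infty_c)$: a radial $g$ with potential $f$ is induced exactly when $e^{cf(r)}$ is a sum of squares of holomorphic functions, which for a radial function means $e^{cf(r)}=\sum_k a_k r^k$ with all $a_k\ge 0$.

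Next I would exploit stability through scaling. For \K\ metrics the Ricci form is invariant under $g\mapsto\alpha g$, so each $\alpha g$ is again \K-Einstein (with constant $\lambda/\alpha$); and by Definition \ref{defst} it remains induced by $(\C P^\infty,g^\infty_c)$ for every $\alpha\in(1-\epsilon,1+\epsilon)$. Hence Calabi's criterion applies to the whole one-parameter family: $e^{c\alpha f(r)}=\sum_k a_k(\alpha)r^k$ with $a_k(\alpha)\ge 0$ for all such $\alpha$. This extra rigidity is precisely what the stability assumption buys us. When $g$ is well-behaved I can now invoke part (2) of Theorem \ref{mainteor2} (a \K-Einstein metric is in particular cscK): $(M,g)$ is either a complex space form of non-positive holomorphic sectional curvature or an open subset of $\C P^n$ with $g=m g^n_c$, $m\in\N^+$. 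In the second case the potential is $f=\tfrac mc\log(1+r)$ up to affine normalization, so $e^{c\alpha f}=(1+r)^{\alpha m}$; since $\alpha m$ fails to be a non-negative integer for all but discretely many $\alpha$, I may pick such an $\alpha$ in $(1-\epsilon,1+\epsilon)$, and then the binomial series $(1+r)^{\alpha m}$ contains negative coefficients, contradicting $a_k(\alpha)\ge 0$. Thus the positive-curvature branch is incompatible with stability, and $(M,g)$ is a complex space form of non-positive holomorphic sectional curvature.

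The main obstacle is the \emph{not} well-behaved case (cf. Definition \ref{WB}), where Theorem \ref{mainteor2} does not apply: the domain is an annulus $r_{\inf}<|z|^2<r_{\sup}$ with $r_{\inf}>0$, so Calabi's criterion cannot be centered at the origin but must be read from the diastasis at an interior point. There I would integrate the radial \K-Einstein equation directly and confront its solutions with the non-negativity of the coefficients of $e^{c\alpha f}$ along the family $\{\alpha g\}$, with the aim of showing — this is exactly the content of Conjecture 2 — that the induced solutions are again forced into the flat, hyperbolic, or projective shapes, after which the same binomial sign obstruction eliminates the projective one. Controlling the ODE together with the positivity constraint in this annular regime is where the real work lies; the exclusion of the positive case itself, by contrast, is handled cleanly by the sign of the non-integer binomial coefficients.
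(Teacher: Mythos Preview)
Your treatment of the well-behaved case is correct and matches the paper: you invoke part (2) of Theorem \ref{mainteor2} and then rule out the $\C P^n$ branch by the sign of the binomial coefficients of $(1+r)^{\alpha m}$, which is exactly why $g_{FS}$ is unstable.

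The genuine gap is in the not well-behaved case. You propose to ``integrate the radial \K-Einstein equation directly'' and show that the induced solutions are forced into the flat, hyperbolic or projective shapes --- but you yourself identify this as ``exactly the content of Conjecture 2'', and Conjecture 2 is \emph{open}. The paper does not prove it and does not need it. What the paper actually does (Proposition \ref{mainprop3}) is much more modest and uses the machinery of the $Q_k^\epsilon$ functions rather than the ODE: since $g$ is not well-behaved one has $y_{\inf}>0$, hence $\psi(y_{\inf})=0$ by Lemma \ref{ANAL}, and then \eqref{Pk} gives
\[
Q_k^1(y_{\inf})=y_{\inf}(y_{\inf}-1)\cdots(y_{\inf}-k+1).
\]
If $y_{\inf}\notin\Z$ this is eventually negative, contradicting (a2) of Lemma \ref{leml}; so $y_{\inf}\in\Z^+$. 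But the potential of $\alpha g$ is $\alpha f$, so its $y_{\inf}$ is $\alpha\,y_{\inf}$, and the same argument applied to $\alpha g$ forces $\alpha\,y_{\inf}\in\Z^+$ for every $\alpha\in(1-\epsilon,1+\epsilon)$ --- impossible since $y_{\inf}\neq 0$. Thus stability forces well-behavedness, and one is back in the case you already handled. This integrality obstruction is the missing idea; without it your argument in the annular regime is only a plan, not a proof.
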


We point out that Theorem \ref{mainteor3}  extends to arbitrary radial KE metrics  \cite[Theorem 1.1]{LSZ} valid in the  Ricci flat case.

\vskip 0.3cm
The paper is organized as follows. In Section \ref{secradext}
we summarize some basic facts on extremal \K\ radial metrics. 
In  particular we recall in Lemma \ref{lemmasimple} that these  metrics on an $n$ dimensional  complex manifold
can be described by a rational  family $\psi (y)$, 
$y(r)=rf'(r)$, $r\in (r_{\inf}, r_{\sup})$, depending on four real  parameters 
$A, B, C, D$  (in particular the vanishing of $A, B$ and $D$ is equivalent
to the constancy of the holomorphic sectional curvature of the metric involved).
In Subsection \ref{Examples} we provide many examples (some already mentioned above) of radial extremal metrics by fixing some values of the parameters
and finding their  explicit \K\ potentials.
Finally, the last section  (Section \ref{secproofs}) is dedicated to the proofs of our main results. In Subsection \ref{subeps}, after   briefly recalling  the concept of $\epsilon$-resolvability ($\epsilon=-1, 0, +1$) 
of rank $N\leq\infty$ of a real analytic \K\ metric $g$ and Calabi's
criterium for the inducibility of  $g$ into a finite or infinite dimensional complex space form of hyperbolic, flat or elliptic type (depending on the sign of $\epsilon$), we  specialize  to the case of radial \K\ metrics (Lemma \ref{resol}).
In Subsection \ref{subQ} to a given   radial extremal metric $g$ we associate
a sequence of  rational functions $Q_k^\epsilon (y)$, $k\geq 1$, which are the key tools in the proof of our theorems. 
This is reason why in Lemma \ref{lemlem}
we deeply analyze these functions and their  higher and lower degree coefficients in terms of $A, B, C$ and $D$ of the extremal metric involved.
Finally, in Subsection \ref{subproofs} one can find the proofs of Theorem \ref{mainteor}, Theorem \ref{mainteor2}
and Theorem \ref{mainteor3}.

\vskip 0.3cm
We would like to thank Miguel Abreu for his interest in our work 
and for stimulating discussions about extremal radial metrics.

\section{Radial extremal metrics}\label{secradext}
Let $g$ be a radial  \K\ metric on a complex manifold $M$, equipped with complex coordinates $z_1, \dots ,z_n$.
Let  $\omega =\frac{i}{2}\partial\bar\partial f(r)$ denotes its associated \K\ form
 where $f:(r_{\inf}, r_{\sup})\rightarrow \R$, $r = |z_1|^2 + \cdots + |z_n|^2$, $0\leq r_{\inf}<r<r_{sup}$
 and $(r_{\inf}, r_{\sup})$ is the maximal domain where the radial potential $f$ is defined.

 It is not hard to see that 
the matrix of the metric $g$ and its inverse read  as
\begin{equation}\label{metric}
g_{i\bar j}=f''(r) \bar z_i z_j+f'(r)\delta_{ij}, \ \ \ g^{i\bar j}=\frac{\delta_{ij}}{f'(r)} -\frac{f''(r)}{f'(r) (rf'(r))'}\bar z_j z_i.
\end{equation}
Set 
\begin{equation}\label{y(r)}
y(r):=rf'(r).
\end{equation}
and
\begin{equation}\label{psi(y)}
\psi(r) := ry'(r).
\end{equation}
The fact that $g$ is a metric is equivalent to 
$y(r)>0$ and $\psi (r)>0$, $\forall r\in (r_{\inf}, r_{\sup})$.
Then 
\begin{equation}\label{limtr}
\lim_{r\rightarrow r_{\inf^+}}y(r)= y_{\inf}
\end{equation}
is a non negative real number.
Similarly set
\begin{equation}\label{limtrbis}
\lim_{r\rightarrow r_{\sup^-}}y(r)= y_{\sup}\in (0, +\infty].
\end{equation}

Therefore we can invert the map
$$(r_{\inf}, r_{\sup})\rightarrow  (y_{\inf}, y_{\sup}),\  r\mapsto y(r)=rf'(r)$$ 
on $(r_{\inf}, r_{\sup})$ and think $r$ as a function of $y$, i.e. $r=r(y)$.

The following lemma provides a classification of radial extremal \K\ metrics.
Even if its proof  is   known (see, for example, \cite{abreuC1} and also \cite{XD}) 
we include  it here  for reader's convenience.

\begin{lem}\label{lemmasimple}
Set 
$$\psi (y):=\psi (r(y)).$$
A radial \K\ metric  $g$ is extremal if and only if
\begin{equation}\label{ypsi}
\psi(y) = y - \frac{A}{y^{n-1}} - \frac{B}{y^{n-2}} - C y^2 - D y^3. 
\end{equation}
for some $A,B,C,D \in \R$.
Moreover, the following facts hold true:
\begin{itemize}
\item [(a)]
$g$ is a cscK metric\footnote{with constant scalar curvature equal to  $Cn(n+1)$.} iff $D=0$ and the sign of 
the scalar curvature is equal to the sign of $C$;
\item [(b)]
$g$ is a KE metric with Einstein constant $\lambda$ iff  $B=D=0$  and $C = \frac{\lambda}{2(n+1)}$;
\item [(c)]
$g$ has constant holomorphic sectional curvature 
iff $A=B=D=0$.
\end{itemize}
\end{lem}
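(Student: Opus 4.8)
The plan is to unwind the extremality condition into an ordinary differential equation for $\psi$, solve that equation explicitly, and then read off (a)--(c) by matching coefficients. Recall that $g$ is extremal exactly when the $(1,0)$-gradient of $\scal$ is holomorphic. Since a radial metric has radial scalar curvature, say $\scal = S(r)$, this gradient has components $V^i = g^{i\bar j}\partial_{\bar j}S = S'(r)\sum_j g^{i\bar j}z_j$. Using the inverse metric in \eqref{metric} one computes $\sum_j g^{i\bar j}z_j = z_i/y'(r)$, so that $V^i = (S'(r)/y'(r))\,z_i$. As each $z_i$ is holomorphic, $V=\sum_i V^i\partial_{z_i}$ is holomorphic if and only if $\partial_{\bar k}(S'/y')=0$ for all $k$, i.e. if and only if $S'(r)/y'(r)$ is constant. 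Hence $g$ is extremal precisely when $\scal$ is an affine function of $y=rf'$.

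The technical heart is to express $\scal$ through $\psi$. From \eqref{metric} the single nontrivial eigenvalue of $(g_{i\bar j})$ is $f'+rf''=y'$ and the other $n-1$ eigenvalues equal $f'$, so $\det(g_{i\bar j})=(f')^{n-1}y'$ and $\Phi:=\log\det(g_{i\bar j})=(n-1)\log f'+\log y'$; writing $f'=y/r$ and $y'=\psi/r$ this becomes $\Phi=(n-1)\log y+\log\psi-n\log r$. Combining $\scal=-g^{i\bar j}\partial_i\partial_{\bar j}\Phi$ with the radial contractions $g^{i\bar j}\bar z_i z_j=r/y'$ and $g^{i\bar j}\delta_{ij}=(n-1)/f'+1/y'$, and passing to $y$ as the independent variable (so that $d/dr=(\psi/r)\,d/dy$), a direct computation gives, with $'=d/dy$,
\[
\scal = -\psi'' - \frac{2(n-1)}{y}\psi' - \frac{(n-1)(n-2)}{y^2}\psi + \frac{n(n-1)}{y}.
\]
Imposing extremality, i.e. $\scal=a+by$, turns this into the Euler equation
\[
\psi'' + \frac{2(n-1)}{y}\psi' + \frac{(n-1)(n-2)}{y^2}\psi = \frac{n(n-1)}{y} - a - by .
\]
Its indicial equation $m^2+(2n-3)m+(n-1)(n-2)=0$ has discriminant $1$ and roots $-(n-1),-(n-2)$, giving homogeneous solutions $y^{-(n-1)},y^{-(n-2)}$; particular solutions are $y$ (for $n(n-1)/y$), $y^2$ (for $-a$) and $y^3$ (for $-by$). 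Thus the general solution is exactly \eqref{ypsi}, with $A,B$ the two homogeneous constants and $C=a/(n(n+1))$, $D=b/((n+1)(n+2))$, which proves the main equivalence.

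It then remains to establish (a)--(c) by substitution. Re-inserting \eqref{ypsi} one finds $\scal=n(n+1)C+(n+1)(n+2)Dy$, the $A,B$ terms dropping out since they solve the homogeneous equation; hence $g$ is cscK iff $D=0$, in which case $\scal=n(n+1)C$ has the sign of $C$, giving (a). For (b) I would use the Ricci form $\rho=-i\partial\bar\partial\Phi$: the condition $\rho=\lambda\omega$ reads $\partial\bar\partial(\Phi+\tfrac{\lambda}{2}f)=0$, hence $\Phi_r=-\tfrac{\lambda}{2}f'$. Writing $\Phi_r=P/r$ with $P:=(n-1)\psi/y+\psi'-n$ and $f'=y/r$, this becomes $P=-\tfrac{\lambda}{2}y$; substituting \eqref{ypsi} yields $P=-B/y^{n-1}-(n+1)Cy-(n+2)Dy^2$, so matching forces $B=D=0$ and $C=\lambda/(2(n+1))$. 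For (c), a metric of constant holomorphic sectional curvature is K\"ahler--Einstein, so by (b) $B=D=0$ and $\psi=y-A/y^{n-1}-Cy^2$; the residual vanishing $A=0$ follows by matching with the flat, complex hyperbolic and complex projective models, for which direct substitution of their potentials gives $\psi=y-Cy^2$ with $C$ equal to the curvature (equivalently, by a direct computation of the holomorphic sectional curvature, which is the one place where more than the Ricci trace is needed).

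The main obstacle is the scalar-curvature computation in the second paragraph, where the change of variables from $r$ to $y$ and the radial contractions must be tracked with care; the secondary technical point is pinning down $A=0$ in part (c), which requires curvature information beyond the Einstein (trace) condition used for (a) and (b).
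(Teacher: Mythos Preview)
Your argument is correct and follows the same route as the paper: both reduce extremality to $\scal$ being affine in $y$, compute $\scal$ in terms of $\psi$ (the paper's compact form $\scal=n(n-1)/y-y^{1-n}\frac{d^2}{dy^2}\bigl[y^{n-1}\psi\bigr]$ expands exactly to your expression), and solve the resulting second-order ODE; your Euler-equation treatment with indicial roots $-(n-1),-(n-2)$ is equivalent to the paper's direct double integration, and (a), (b) are handled in the same way.

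One caveat on (c). Your ``matching with models'' argument for $A=0$ does not work as stated: even though a metric of constant holomorphic sectional curvature is locally isometric to a model, that isometry need not carry your given radial coordinates to the model's standard radial coordinates, so the value of $A$ cannot simply be imported from the model. Your parenthetical alternative---a direct curvature computation beyond the Ricci trace---is the correct fix, and is precisely what the paper does (it invokes constancy of $|R|^2$, citing an explicit formula from \cite{LSZbis}). You also omit the converse direction of (c); the paper observes that $A=B=D=0$ gives $\psi=y-Cy^2$, equivalently $f''+C(f')^2=0$, which integrates to a space-form potential. A minor point in (b): the step from $\partial\bar\partial(\Phi+\tfrac{\lambda}{2}f)=0$ to $\Phi_r=-\tfrac{\lambda}{2}f'$ tacitly uses that a radial pluriharmonic function is constant for $n\ge 2$ (and of the form $a\log r+b$ for $n=1$).
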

\begin{proof}
From (\ref{metric}), we easily get 
$$\det \left(g_{i\bar j}\right)(r)=\frac{(y(r))^{n-1}\psi (y)}{r^{n}}.$$
By  straightforward computations (see e.g. \cite{LSZ} for details) 
we can  compute the Ricci tensor's components
\begin{equation}\label{ricci}
\Ric_{i\bar j}(r)=-\frac{\de^2 \log\det \left(g_{i\bar j}\right)}{\de z_i\de\bar z_j}=\frac{-\frac{d\sigma}{dy}\psi(y)+\sigma(y)-n }{r^2}\bar z_i z_j+\frac{n-\sigma(y) }{r}\delta_{ij},
\end{equation}
and the scalar curvature $s$ of $g$ as a function of $y$
\begin{equation}\label{scalar}
s(y)=\sum_{i, \bar j=1}^n g^{i\bar j}Ric_{i \bar j}=\frac{n(n-1)}{y}-y^{1-n}\frac{d^2 \left[y^{n-1}\psi(y)\right]}{dy^2},
\end{equation}
where
$$\sigma(y):=y^{1-n}\frac{d \left[y^{n-1}\psi(y)\right]}{dy}=(n-1)\frac{\psi(y)}{y}+\frac{d\psi}{dy}.$$

Now, by definition a \ \K\ metric is extremal if and only if the gradient field
\begin{equation}\label{gradfield}
X = \sum_{i, j = 1}^n g^{i\bar j}  \frac{\partial s}{\partial \bar z_j} \frac{\partial}{\partial z_i}
\end{equation}

is holomorphic.

Since the scalar curvature is a radial function, we have $\frac{\partial s}{\partial \bar z_j} = s'(r) z_j$: from this and (\ref{metric}) we can rewrite (\ref{gradfield}) as 

\begin{equation}\label{gradfield2}
X = \sum_{i, j = 1}^n \left(\frac{\delta_{ij}}{f'(r)} -\frac{f''(r)}{f'(r) (rf')'}\bar z_j z_i \right) s'(r) z_j \frac{\partial}{\partial z_i} =  \sum_{i = 1}^n \frac{s'(r)}{(rf'(r))'} z_i \frac{\partial}{\partial z_i} 
\end{equation}

It immediately follows that $X$ is holomorphic if and only if $\frac{s'(r)}{(rf'(r))'}  = \frac{s'(r)}{y'(r)} = \gamma_1$ for some constant $\gamma_1 \in \R$, i.e. 

\begin{equation}\label{condextr}
s = \gamma_1 y + \gamma_2
\end{equation}
where $\gamma_2 \in \R$. By (\ref{scalar}), this means 

\begin{equation*}\label{condextr2}
\frac{n(n-1)}{y}-y^{1-n}\frac{d^2 \left[y^{n-1}\psi(y)\right]}{dy^2}= \gamma_1 y + \gamma_2
\end{equation*}

which integrated gives

\begin{equation*}\label{condextr3}
\psi(y) = y -  \frac{\gamma_1}{(n+1)(n+2)} y^3- \frac{\gamma_2}{n(n+1)} y^2 +  \frac{\gamma_3}{y^{n-2}} + \frac{\gamma_4}{y^{n-1}}
\end{equation*}

which is exactly (\ref{ypsi}) for

\begin{equation}\label{ABCD}
A = - \gamma_4, \ \ B = - \gamma_3, \ \ C = \frac{\gamma_2}{n(n+1)}, \ \ D= \frac{\gamma_1}{(n+1)(n+2)}.
\end{equation}

The proof of  (a) follows by \eqref{condextr} and \eqref{ABCD} and that of (b) can be easily obtained by using  \eqref{metric} and \eqref{ricci}.
If  $g$ has constant holomorphic sectional curvature  then in particular is KE ($B=D=0$)
and the constancy of the norm of the Riemannian tensor $|R|^2$ of $g$ implies $A=0$
as it follows for example by using the expression of $|R|^2$ in \cite{LSZbis}.
Finally, if  $A=B=D=0$ then \eqref{y(r)},
\eqref{psi(y)} and  \eqref{ypsi}  yield
$$f''(r)+C(f'(r))^2=0$$
which integrates explicitly and gives a metric with constant holomorphic sectional curvature.
\end{proof}
Let $g$ be a radial extremal metric as above. By setting
$e^t=r$ we deduce by \eqref{y(r)} and \eqref{psi(y)} that 
the function 
$$y(t):=y(e^t)$$
satisfies the ODE 
$$\frac{dy}{dt}=\psi (y(t)),$$
where $\psi (y)$ is given by \eqref{ypsi}.

The following simple  lemma will be  crucial 
in the proof of Proposition \ref{lemAB01} and in Theorem \ref{mainteor2}.
\begin{lem}\label{ANAL}
The following hold true:
\begin{itemize}
\item [(i)]
If   $\lim_{y\rightarrow y_{\inf}^+}\psi (y)\neq 0$
then $y_{\inf} =0$.
\item [(ii)]
If   
$\lim_{y\rightarrow y_{\sup}^-}\psi (y)\neq 0$
then $y_{\sup} =+\infty$.
\end{itemize}
\end{lem}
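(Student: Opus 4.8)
The plan is to prove both statements by contraposition, using the autonomous ODE $\frac{dy}{dt}=\psi(y)$ in the variable $t=\log r$ introduced just above, together with the maximality of the domain $(r_{\inf},r_{\sup})$ of the radial potential $f$. Throughout I use that, by Lemma \ref{lemmasimple}, $\psi$ is a rational function whose only pole is at the origin, hence is smooth (and locally Lipschitz) on $(0,+\infty)$, and that $\psi>0$ on $(y_{\inf},y_{\sup})$. I carry out (i) in full; statement (ii) is obtained by the mirror-image argument.

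For (i) I would assume $y_{\inf}>0$ and show $\lim_{y\to y_{\inf}^+}\psi(y)=0$. Since $y_{\inf}>0$ is a regular point of $\psi$, this limit exists and equals $\psi(y_{\inf})\geq 0$, the inequality coming from positivity of $\psi$ on $(y_{\inf},y_{\sup})$. Suppose for contradiction that $\ell:=\psi(y_{\inf})>0$. The first step is to locate the left endpoint in the $t$-variable: separating variables and integrating from $y_{\inf}$ up to a fixed interior value $y_1$ gives $t_1-t_{\inf}=\int_{y_{\inf}}^{y_1}\frac{dy}{\psi(y)}$, and since $\psi\geq \ell/2>0$ near $y_{\inf}$ the integrand stays bounded, so the improper integral converges. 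Hence $t_{\inf}>-\infty$, i.e. $r_{\inf}=e^{t_{\inf}}>0$. This is precisely the step that disposes of the degenerate case $r_{\inf}=0$: a nonzero limiting value of $\psi$ forces $r_{\inf}$ to be a genuine positive number.

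The second step is a continuation argument at the now-interior regular point $y_{\inf}$. Solving the initial value problem $\dot y=\psi(y)$, $y(t_{\inf})=y_{\inf}$, produces by uniqueness an extension of our solution to an interval $(t_{\inf}-\delta,t_{\sup})$ for some $\delta>0$; shrinking $\delta$ if necessary, continuity of $\psi$ keeps both $y>0$ and $\psi(y)>0$ on the enlarged interval. Passing back through $f'(r)=y(r)/r$ and integrating then extends the potential $f$ to $(r_{\inf}e^{-\delta},r_{\sup})\supsetneq(r_{\inf},r_{\sup})$, still defining a radial \K\ metric because the two positivity conditions survive. This contradicts the maximality of $(r_{\inf},r_{\sup})$, so $\ell=0$, which is the claim. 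For (ii) the same scheme runs to the right: assuming $y_{\sup}<+\infty$ with $\psi(y_{\sup})>0$ makes $\int_{y_1}^{y_{\sup}}\frac{dy}{\psi(y)}=t_{\sup}-t_1$ converge, forcing $r_{\sup}<+\infty$, and a continuation of the solution past $t_{\sup}$ again violates maximality.

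I expect the delicate point to be the interplay with maximality rather than any computation: one must make sure the argument is uniform in whether $r_{\inf}$ (resp. $r_{\sup}$) is a priori finite or not, which the convergent-integral step handles cleanly, and one must verify that the extension furnished by ODE theory genuinely preserves the conditions $y>0$ and $\psi>0$ that certify $f$ as a metric, so that the contradiction with maximality is legitimate.
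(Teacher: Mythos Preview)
Your argument is correct and matches the paper's approach: both rely on the autonomous ODE $\tfrac{dy}{dt}=\psi(y)$ together with maximality of the interval, using that $\psi$ is smooth at any $y_{\inf}>0$ so the solution could be continued past a finite $t_{\inf}$. The only cosmetic difference is the order of the contradiction: the paper first shows $t_{\inf}=-\infty$ from maximality (regardless of the value of $\psi(y_{\inf})$) and then notes that a function with a finite limit at $-\infty$ whose derivative also has a limit must have derivative tending to $0$, whereas you assume $\psi(y_{\inf})>0$ first, use separation of variables to force $t_{\inf}>-\infty$, and then invoke continuation.
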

\begin{proof}
In order to prove (i) assume by contradiction that  $y_{\inf}\neq 0$ in \eqref{limtr}.
Note first that $t_{\inf}:=\lim_{r\to r_{\inf}} \log r=-\infty$:
otherwise  (if  $t_{\inf}\in\R$) the function $y(t)$ could be prolonged to an open interval containing $t_{\inf}$
being 
 the solution of   the   Cauchy problem
\begin{equation}
\begin{cases}
 y'(t)=\psi (y(t)) \\
 y(t_{\inf})=y_{\inf}>0.
\end{cases}
\end{equation}
Thus, by  the continuity of $\psi (y)$ at $y_{\inf}\neq 0$,
$$\lim_{y\rightarrow y_{\inf}^+}\psi (y)=\lim_{t\rightarrow -\infty}\psi (y(t))=\lim_{t\rightarrow -\infty} y'(t)=0,$$
where the last equality follows by  \eqref{limtr} when $t_{\inf}=-\infty$,
the desired contradiction.
The proof of  (ii) is obtained similarly by considering $t_{\sup}=\lim_{r\to r_{\sup}} \log r$.
\end{proof}

\begin{rmk}\label{WBbis}\rm
In view of the definition of well-behaveness  (Definition \ref{WB})  (i) of Lemma \ref{ANAL} can be equivalently expressed by saying that if the
a radial metric $g$ is not well-behaved (i.e. $y_{\inf}\neq 0$) then $\psi (y_{\inf})=0$.
\end{rmk}

\subsection{Examples}\label{Examples}
\begin{example}\label{exfond}\rm
Consider the extremal  radial metric obtained by taking
$A=B=0$,  
$C=-3$ and $D=-2$ in \eqref{ypsi}.
In this case we can solve explicitly the ODE equation
 \begin{equation*}\label{explInt}
\psi(y) = \frac{d y}{d t} =  y + 3 y^2 + 2 y^3 = y(y+1)(2y+1)
\end{equation*}
and setting $r=e^t$,  we find a unique solution (up to change of complex coordinates) given by the   \K\ potential
$$f(r)  =  \log\left[ \frac{1 - \sqrt{1-4r}}{2r}\right] ,\ 0< r<\frac{1}{4},$$
which shows that $g$ well-behaved being  defined at  $r=0$.

Fix $n\geq 1$ and consider the open unit disk of $\C^n$
of radius $\frac{1}{2}$, namely 
$$M=\{z=(z_1, \dots, z_n)\in \C^n \ | \ 0< |z|^2< \frac{1}{4}\}$$
equipped with the \K\ metric $g$ whose associated  \K\ form
is $\omega =\frac{i}{2}\partial\bar\partial f(r)$.

In order to construct a \K\ immersion
of $(M, g)$
 into $(\C H^{\infty}, g_{hyp})$
consider the function
$$1-e^{-f}=\frac{1}{2}\left( 1- \sqrt{1-4r}\right).$$
Now, recall the Taylor expansion 
$$\sqrt{1+x} = \sum_{k=0}^{\infty} { 1/2 \choose k} x^k,$$
where 
$${ 1/2 \choose k} = \frac{1/2 (1/2  - 1) \cdots (1/2  - k + 1)}{k!}.$$

Therefore

$$1-e^{-f}=\frac{1}{2}\left(1-1-\sum_{k=1}^\infty (- 4)^k{ 1/2 \choose k}r^k\right)= \frac{1}{2}\sum_{{k=1}}^\infty  4^k\left|{ 1/2 \choose k}\right|r^k.$$

By replacing $r = |z_1|^2 + \cdots + |z_n|^2$ then one finds an explicit \K\ immersion via monomials 
 into $(\CH^{\infty}, g_{hyp})$:

$$z=(z_1, \dots, z_n)\mapsto\left(\mathellipsis,\sqrt{\frac{4^{|j|}}{2}\left|{ 1/2 \choose |j|}\right|\frac{|j|!}{j!}}z^j,\mathellipsis\right)_{j\in\N^n,|j|\geq1},$$

where,  for $j=(j_1, \dots j_n)\in \N^n$ we set $z^j:=z_1^{j_1}\dots z_n^{j_n}$, $j!:=j_1!\dots j_n!$, $|j|:=j_1+\cdots +j_n$.

By multiplying the metric $g$ by a positive constant one then obtain \K\ immersion of $(M, g)$ into   $(\C H^{\infty}, g_{c'}^\infty)$ for all $c'<0$ and hence  
into $(\ell^2(\C), g_0)$ and $(\C P^\infty, g_{c}^\infty )$, for all $c>0$  (cfr. \cite[Lemma 5 and Lemma 8]{DHL}).

It remains an open and interesting problem to classify all  extremal radial metrics induced by infinite dimensional complex space form.

\end{example}

\begin{example}\rm\label{simanca}
By taking $n=2$,   $A=C=D=0$, $B=1$ in \eqref{ypsi} one gets
\begin{equation*}\label{explInt}
\psi(y) = \frac{d y}{d t} =  y-1
\end{equation*}
which can be easily integrated to find the 
 \K\ potential
\begin{equation*}
f_{BS}(r)=r+\log r, \  0< r<+\infty.
\end{equation*}

The scalar (not Ricci) flat    \K\ metric  $g_{BS}$ corresponding to this potential is the celebrated  {\em Burns-Simanca metric}.
Notice that $g_{BS}$ is not well-behaved since $rf'(r)=r+ 1\rightarrow 1$ for $r\rightarrow 0^+$.
One can show that $g_{BS}$ is  projectively induced: an explicit \K\ immersion can be found in \cite{LSZ} (see also \cite{LSZbis} and \cite{LMZ}). 
Moreover, one can easily  check that 
$g_{BS}$ cannot be induced by any complex space form of non positive holomorphic sectional curvature in accordance
with   (1) of Theorem \ref{mainteor2}.
\end{example}

\begin{example}\rm\label{partbal}
It is not hard to see that the  radial \K\ metric 
corresponding to the \K\ potential
$$f(r)=\log r-\log (1 -  r^3)$$
provides 
an example of not well-behaved  infinitely projectively induced radial cscK (not KE)  metric
on the punctured disk of $\C^2$
with {\em negative} scalar curvature ($s=-24$).
\end{example}

\begin{rmk}\rm
By the previous two examples it is natural to see
if there  exist projectively induced 
not well-behaved cscK  radial \K\ metrics 
with {\em positive} scalar curvature.
At the moment we do not know any example of such metrics.
\end{rmk}

\begin{example}\rm\label{EH}
In  order to describe all  the radial  Ricci flat  metrics 
one has to solve the ODE ($B=C=D=0$ in \eqref{ypsi}):
\begin{equation*}\label{ricciflat}
\frac{d y}{d t}= \psi(y)  =  \frac{y^n-A}{y^{n-1}}.
\end{equation*}
For either $n=1$ or $A=0$ we get the flat metric so we  assume $n\geq 2$ and $A\neq 0$.
The general  solution of \eqref{ricciflat} is given  by
$y(t)=\left( \gamma e^{nt} +A \right)^{\frac{1}{n}}$ for some $\gamma >0$.
By setting $r=e^t$  we then  get 
$$y(r)=rf'(r)=\left( \gamma r^n +A \right)^{\frac{1}{n}}.$$
By the change of complex variables 
$$z_j\mapsto w_j:=(\frac{\gamma}{|A|})^{\frac{1}{2n}}z_j, j=1, \dots ,n,$$
and still denoting by $r=|w_1|^2+\cdots +|w_n|^2$
we  deduce that   the radial potential $f(r)$ of a Ricci flat  (not flat) metric 
on a  $n$-dimensional complex manifold $n\geq 2$ is, up to the multiplication of a positive constant
($|A|^{-\frac{1}{n}}$),
given by:
\begin{equation}\label{frflat}
f(r)=\int\left(1+ \frac{A}{|A|} r^{-n}\right)^{\frac{1}{n}}.
\end{equation}

If  $A<0$ since $f'(r)>0$ one gets $r_{\inf}=1$ 
and thus $y(r)=rf'(r)\rightarrow 0$ for $r\rightarrow 0^+$ and 
the corresponding radial Ricci flat metric  is well-behaved and  not infinitely projectively induced by (2) of Theorem \ref{mainteor2}.

If  $A>0$ then $r_{\inf}=0$ and $y(r)=rf'(r)\rightarrow 1$
and so $g$ is not  well-behaved.
If we further assume that  $n=2$ one can easily  integrate \eqref{frflat}
and get
$$f(r) = \sqrt{r^2+1} + \ln r - \ln(1 + \sqrt{r^2+1})$$
which is the potential of the celebrated Eguchi-Hanson metric $g_{EH}$
on $\C^2\setminus\{0\}$.
It is not hard to see that $\alpha g_{EH}$ is not projectively induced for all $\alpha\in \R^+\setminus \Z$
(cfr. the proof of \cite[Theorem 1.1]{LSZ}). On the other hand
the first and third authors together with M. Zedda have shown in \cite[Corollary 1]{LOIZEZU}
that  $\alpha g_{EH}$ is not infinitely  projectively induced for all $\alpha \in \Z^+$.
By combining these two facts we deduce  that 
$g_{EH}$ cannot be induced by $(\C P^\infty, g_c^\infty)$ for all $c>0$.
This shows the validity of Conjecture 2 when $n=2$.
The case $n>2$ and $A>0$ still  remains open.
\end{example}

\begin{example}\rm\label{exKENWB}
Let $F:(1, +\infty)\rightarrow \real$ be given by
$$F(y)=e^{-\frac{2}{y+2}}\left[\frac{y-1}{y+2}\right]^{\frac{1}{3}} \ 1<y<+\infty.$$
Define 
$$y(r)=F^{-1}(r), \ 0<r<1.$$
One can easily verify that $y(t)$ with $t=\log r$
satisfies the ODE equation 
$$\frac{dy}{dt}=\psi (y)=y-\frac{4}{3y}-\frac{1}{3}y^2$$
i.e. we take  $n=2$, 
$A=\frac{4}{3}$
$C=-\frac{1}{3}$, $B=D=0$ in \eqref{ypsi}.
By Lemma \ref{lemmasimple}  we then get a radial KE metric $g$ with negative Einstein constant ($\lambda =-2$) on a two dimensional complex manifold.
Moreover  $g$ is not  well-behaved since one can easily check that   $y\rightarrow 1$  
as $r\rightarrow r_{\inf}^+=0$. One can prove (cfr. Remark \ref{Q11} below) that $g$ is not projectively induced in accordance with Conjecture 2.

\end{example}

\begin{example}\rm\label{exrinf}
In this last example we construct a KE radial metric with $r_{\inf}\neq 0$ (and hence $t_{\inf}\neq -\infty$).
Let $F:(0, 1)\rightarrow \R$ be given by
$$F(y)=\log \left[\frac {\sqrt{2y^2+y+1}}{(1-y)^{\frac{1}{4}}}\right], \ 0<y<1$$
One can check that  
$$y(t)=F^{-1}(y(t)), \ -\frac{3}{\sqrt{7}}\arctan\left(\frac{1}{\sqrt{7}}\right)<t<+\infty,$$
satisfies the ODE 
$$\frac{dy}{dt}=\psi (y)=y-\frac{1}{y}-2y^2,$$
namely we take  $n=2$, $A=-1$, $C=2$, $B=D=0$ in \eqref{ypsi}.

By Lemma \ref{lemmasimple}  we then get a radial KE metric $g$ with positive Einstein constant ($\lambda =12$) on a two dimensional complex manifold.
Moreover 
$$r_{\inf}=e^{-\frac{3}{\sqrt{7}}\arctan\left(\frac{1}{\sqrt{7}}\right)}\neq 0$$
and  $g$ is well-behaved since one can easily check that   $y\rightarrow 0$  
as $r\rightarrow r_{\inf}^+$.

Finally, notice that  $g$ cannot be induced by any finite or infinite dimensional  complex space form as it follows by  Theorem \ref{mainteor} and Theorem \ref{mainteor2}.
\end{example}

\section{The proofs of the main results}\label{secproofs}

\subsection{Radial metrics induced by  complex space forms}\label{subeps}
Let $\epsilon\in\{-1,0,1\}$. Following Calabi \cite{calabi}  we say  that a \K\ metric $g$ on a complex manifold 
$M$ is $\epsilon$--resolvable of  rank $N\in\N\cup\{\infty\}$ at  $p\in M$ if  the  matrix $(B_{jk})$ defined by considering the expansion around the point $p$ of 
\begin{equation}\label{gendiast}
\epsilon(e^{\epsilon D_p(z) } -1)+(1-\epsilon^2) D_p(z)=\sum_{m_j ,m_k\in\N^n} B_{ jk} (z - p)^{m_j} (\bar z - \bar p)^{m_k},\end{equation}
 is positive semidefinite and its rank is N, where  $D_p(z)$ is Calabi's diastasis function (cfr. \cite{calabi}, \cite{LoiZedda-book}). Here, $z^{m_j}$ denotes the monomial in $n$ variables $\prod_{\alpha=1}^n z_\alpha^{m_ {\alpha, j}}$ and we arrange every $n$-tuple of nonnegative integers as a sequence $m_j = (m_{1, j} , \mathellipsis ,m_{n, j} )$ such that $m_0 = (0, \mathellipsis , 0)$, $|m_j|\leq |m_{j+1}|$ for all positive integer $j$ and all the $m_j$'s with the same $|m_j |$ using lexicographic order. 
Moreover Calabi's criterium affirms that   a \K\ metric $g$ is $\epsilon$--resolvable of  rank $N\in\N\cup\{\infty\}$ at  $p$ if and only if there exists a neighborhood of $p$ that can be  holomorphically and isometrically (\K) immersed respectively in $(\CH^N,g_{hyp})$ for $\epsilon=-1$, $(\C^N,g_0)$ for $\epsilon=0$ and $(\CP^N,g_{FS})$ for $\epsilon=1$. 
Notice that if  $M$ is connected, then the property of resolvabilty does not depend on the choice of the point $p$.

When the metric $g$ is radial with associate \K\ form $\omega =\frac{i}{2}\partial\bar\partial f(r)$, $r\in(r_{\inf}, r_{\sup})$
(as in the previous section) by using Calabi's criterium (see \cite{calabi}) one can prove
the following result which can be obtained  by following the same outline of  \cite[Lemma 2.2]{LSZ} where 
the authors of the present paper consider the  \K\ immersions  of radial \K\ metrics into $(\C P^\infty, g_{FS})$ (namely the $1$-resolvability of infinite rank).

\begin{lem} \label{resol}
Let  $g$ be a radial extremal metric  on a complex manifold $M$ of complex dimension $n$.
Set
\begin{equation}\label{Feps}
F_\epsilon (r)=\epsilon e^{\epsilon f(r) } +(1-\epsilon^2) f(r), \ r\in (r_{\inf}, r_{\sup}).
\end{equation}
If $g$ is   $\epsilon$-resolvable then the following facts hold true:
\begin{itemize}
\item
If $n=1$,
$$\det\left(\frac{\partial^{\alpha+\beta}\F}{\partial z^\alpha\partial\bar z^\beta} \right)_{1\leq\alpha,\beta\leq I}\geq0,\qquad\forall I\in\Z^+.$$
\item
If $n\geq 2$,
$\frac{d^k F_\epsilon}{dr^k}\geq 0$,  
for every positive  index $k$. 
\end{itemize}
Moreover  if $g$ is $\epsilon$-resolvable of finite rank,  there exists an index $I$ such that if $n=1$ one has
$\det\left(\frac{\partial^{\alpha+\beta}\F}{\partial z^\alpha\partial\bar z^\beta} \right)_{1\leq\alpha,\beta\leq h}\equiv 0$
and if $n\geq 2$ one has
$\frac{d^h F_\epsilon}{dr^h}\equiv 0$, $\forall h>I$.   
 \end{lem}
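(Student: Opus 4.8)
The plan is to reduce everything to Calabi's criterion recalled above, of which I only need the \emph{necessity} direction: if $g$ is $\epsilon$-resolvable of rank $N$ at a point $p$, then the Hermitian matrix $(B_{jk})$ obtained by expanding $\epsilon(e^{\epsilon D_p}-1)+(1-\epsilon^2)D_p$ around $p$ is positive semidefinite of rank $N$, and since $M$ is connected this holds at every $p$. For a radial metric I would first write the polarised diastasis centered at $p$ (with $|p|^2=r_0$ and $\langle z,w\rangle=\sum_j z_j\bar w_j$) as $D_p(z,\bar w)=f(\langle z,w\rangle)+f(r_0)-f(\langle z,p\rangle)-f(\langle p,w\rangle)$, and record the elementary factorisation, for $\epsilon=\pm1$,
\[ \epsilon(e^{\epsilon D_p}-1)+\epsilon = C_0\,h(z)\overline{h(w)}\,F_\epsilon(\langle z,w\rangle),\qquad C_0=e^{\epsilon f(r_0)}>0,\ \ h(z)=e^{-\epsilon f(\langle z,p\rangle)}, \]
while for $\epsilon=0$ one has $D_p=F_0(\langle z,w\rangle)+(\text{holomorphic})+(\overline{\text{holomorphic}})+\text{const}$. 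The observation that drives the proof is that the matrix in the statement is exactly the coefficient matrix of the polarised potential $F_\epsilon(\langle z,w\rangle)$: one checks the key identity $\partial_z^\alpha\partial_{\bar z}^\beta\!\left[F_\epsilon(|z|^2)\right]\big|_{z=p}=\alpha!\,\beta!\,B^F_{\alpha\beta}$, where $B^F_{\alpha\beta}$ is the coefficient of $(z-p)^\alpha(\bar w-\bar p)^\beta$ in $F_\epsilon(\langle z,w\rangle)$, so the two matrices differ only by the positive diagonal congruence $\operatorname{diag}(\alpha!)$.

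Suppose first $n\geq2$. Fixing $p=(\sqrt{r_0},0,\dots,0)$ I would restrict the kernel to the $n-1\geq1$ directions orthogonal to $p$, i.e. set $z_1=w_1=\sqrt{r_0}$, so that $(z-p)_j=z_j$ for $j\geq2$. Then all three inner products collapse: $\langle z,p\rangle=\langle p,w\rangle=r_0$ and $\langle z,w\rangle=r_0+v$ with $v=\sum_{j\geq2}z_j\bar w_j$, whence $D_p$ restricts to $f(r_0+v)-f(r_0)$ and the restricted Calabi kernel becomes, up to a positive multiplicative and an additive constant, $F_\epsilon(r_0+v)$. Its expansion $\sum_{k\geq1}(\text{pos.})\tfrac{F_\epsilon^{(k)}(r_0)}{k!}v^k$ has a \emph{diagonal} coefficient matrix in the monomials $z^m\bar w^m$ supported on $\{2,\dots,n\}$; this is the principal submatrix of the positive semidefinite $(B_{jk})$ indexed by those multi-indices, so each diagonal entry is $\geq0$, forcing $F_\epsilon^{(k)}(r_0)\geq0$. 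Letting $r_0$ range over $(r_{\inf},r_{\sup})$ and $k\geq1$ vary yields $\tfrac{d^kF_\epsilon}{dr^k}\geq0$ everywhere, which is the assertion for $n\geq2$.

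For $n=1$ there are no orthogonal directions, so I must argue on the full coefficient matrix. Here the polarised diastasis satisfies $D_p(z,\bar p)\equiv0$, hence the Calabi kernel vanishes identically on the $p$-row (and column); thus the $0$-th row and column of $(B_{jk})$ vanish, giving the block form $\operatorname{diag}\big(0,(B_{jk})_{j,k\geq1}\big)$ with $(B_{jk})_{j,k\geq1}$ positive semidefinite, and correspondingly the coefficient matrix of $e^{\epsilon D_p}$ is block diagonal with $(0,0)$-entry $e^{\epsilon D_p(p)}=1$. Removing the holomorphic$\times$antiholomorphic gauge $C_0h(z)\overline{h(w)}$ — a congruence by an invertible lower–triangular matrix — turns this block–diagonal positive semidefinite matrix into the coefficient matrix of $F_\epsilon(z\bar w)$. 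For $\epsilon=0,1$ this forces the latter to be positive semidefinite, hence so is its $\geq1$ block, which by the key identity is $\big(\tfrac1{\alpha!\beta!}\tfrac{\partial^{\alpha+\beta}F_\epsilon}{\partial z^\alpha\partial\bar z^\beta}\big)_{\alpha,\beta\geq1}$; all its leading principal minors $\det\!\left(\tfrac{\partial^{\alpha+\beta}\F}{\partial z^\alpha\partial\bar z^\beta}\right)_{1\leq\alpha,\beta\leq I}$ are therefore $\geq0$. For $\epsilon=-1$, where $F_{-1}=-e^{-f}<0$, the $(0,0)$-block instead contributes a single negative direction, and one must show — using the base-point normalisation, i.e. comparing with the center-$0$ kernel, which is an honest \emph{affine} function of $F_{-1}(z\bar w)$ and hence intrinsically positive semidefinite around every point — that this negative direction is confined to the $0$-th coordinate, so that the $\geq1$ block remains positive semidefinite.

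Finally, the finite–rank addendum follows by tracking ranks through the same reductions. If $(B_{jk})$ has rank $N$, then for $n\geq2$ the diagonal principal submatrix above has at most $N$ nonzero entries at each $r_0$, so at each point at most $N$ of the real–analytic functions $F_\epsilon^{(k)}$ are nonzero; since finitely many real–analytic functions have only isolated common zeros, at most $N$ of them are not identically zero, whence $\tfrac{d^hF_\epsilon}{dr^h}\equiv0$ for $h>I$ for a suitable $I$. For $n=1$ the relevant positive semidefinite matrix has rank at most $N+1$ at every point, so every leading $h\times h$ minor with $h>N+1$ vanishes pointwise, and by real–analyticity $\det\!\left(\tfrac{\partial^{\alpha+\beta}\F}{\partial z^\alpha\partial\bar z^\beta}\right)_{1\leq\alpha,\beta\leq h}\equiv0$ for $h>I$. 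The hard part will be the bookkeeping of the $n=1$ case: one has to control exactly how the gauge factors coming from the diastasis normalisation act on $(B_{jk})$ so that, after discarding the $0$-th row and column, the surviving block reproduces the matrix of derivatives of $F_\epsilon$, and in particular to pin down the sign in the hyperbolic case $\epsilon=-1$.
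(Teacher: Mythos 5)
Your overall strategy is the right one and coincides with the outline the paper itself points to (Calabi's criterion applied to the polarised diastasis $D_p(z,\bar w)=f(\langle z,w\rangle)+f(r_0)-f(\langle z,p\rangle)-f(\langle p,w\rangle)$, the gauge factorisation $\epsilon e^{\epsilon D_p}=C_0h(z)\overline{h(w)}F_\epsilon(\langle z,w\rangle)$, and the passage to coefficient matrices via $\partial_z^\alpha\partial_{\bar z}^\beta F_\epsilon=\alpha!\beta!B^F_{\alpha\beta}$). The case $n\geq 2$ is complete and correct for all three values of $\epsilon$: restricting to the $n-1$ directions orthogonal to $p$ kills the gauge entirely, the restricted kernel becomes a positive multiple of $F_\epsilon(r_0+v)$ plus a constant, its coefficient matrix is \emph{diagonal} with entries proportional to $F_\epsilon^{(k)}(r_0)$, and positive semidefiniteness of a principal submatrix of $(B_{jk})$ forces each $F_\epsilon^{(k)}(r_0)\geq 0$; the finite-rank addendum then follows from the pointwise rank bound plus real-analyticity as you indicate (your phrase about ``isolated common zeros'' should be replaced by the correct statement that a not-identically-zero real-analytic function of one variable has a discrete zero set, but the argument goes through). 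For $n=1$ with $\epsilon\in\{0,+1\}$ your argument is also sound: adding $\epsilon$ to the $(0,0)$ entry keeps the full coefficient matrix positive semidefinite, an invertible triangular congruence preserves positive semidefiniteness, and all principal minors of the resulting matrix of $F_\epsilon(z\bar w)$ — in particular those indexed by $1,\dots,I$ — are nonnegative.

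The genuine gap is the case $n=1$, $\epsilon=-1$, which you flag but do not close, and your proposed repair does not work as stated. After removing the gauge, the coefficient matrix of $F_{-1}(z\bar w)$ around $p$ is $UBU^*-\tilde u\,\tilde u^*$, where $B$ is the (positive semidefinite, with vanishing $0$-th row and column) Calabi matrix, $U$ is the lower-triangular multiplication matrix of the nonvanishing holomorphic gauge $u(z)=e^{f(r_0)/2}e^{-f(\langle z,p\rangle)}$, and $\tilde u$ is the \emph{full} Taylor coefficient vector of $u$ at $p$. The negative rank-one piece is therefore \emph{not} confined to the $0$-th coordinate: $\tilde u$ has nonzero entries in every slot unless $u$ is constant, i.e.\ unless $p=0$, so the $\{1,\dots,I\}$ block is a positive semidefinite matrix minus a rank-one positive semidefinite perturbation, whose leading principal minors are not formally nonnegative (e.g.\ $\det(X-vv^*)=-v^*\operatorname{adj}(X)v\leq 0$ whenever $X$ is singular positive semidefinite). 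Your suggested fix — comparing with the centre-$0$ kernel, where the constant $-1$ really does sit only in the $(0,0)$ entry — presupposes that $0$ lies in the domain, i.e.\ $r_{\inf}=0$, which is not among the hypotheses of the lemma (the paper explicitly works with radial potentials having $r_{\inf}>0$, cf.\ Example \ref{exrinf}). So the hyperbolic one-dimensional case needs a genuinely different argument exploiting the radial structure of $f$ (or a reduction to base point $0$ that you would have to justify), and as written the proof is incomplete there.
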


\subsection{The rational  functions $Q_k^\epsilon (y)$}\label{subQ}
Given a  radial metric $g$ as above,  by \eqref{Feps} it  is easy to prove by induction that, for $\epsilon=\pm 1$,
\begin{equation}\label{per1}
\frac{d^k \F}{dr^k} = \epsilon g^\epsilon_k(r)\F(r),
\end{equation}
where $g^\epsilon_k(r)$ is a function of the derivatives of $f(r)$ determined by the following recursive definition
\begin{equation}\label{geps1}
g_1^\epsilon (r) = f'(r);\qquad g_{k+1}^\epsilon(r) = \left(g_k^\epsilon\right)' (r)+ \epsilon f'(r)g_k^\epsilon(r).
\end{equation}
Moreover,  we have
\begin{equation}\label{per2}
\frac{d^k F_0}{dr^k} =\frac{d^k f}{dr^k}=g^0_k(r),
\end{equation}
and \eqref{geps1} holds true for $\epsilon=0$, i.e.
\begin{equation*}
g_1^0(r)= f'(r) ;\qquad g_{k+1}^0(r) = \left(g_k^0\right)'.
\end{equation*}

By  setting as before
$r = e^t$,  $y(r)=rf'(r)$ and $\psi(y)=r(rf')'$,
we can rewrite the recursive formula \eqref{geps1}  as 
\begin{equation}\label{gkdef}
g_k^\epsilon (r) = \frac{Q_k^\epsilon(y)}{r^k}
\end{equation} 
where
\begin{equation}\label{Qk}
Q_1^\epsilon  (y):= y;\qquad Q_{k+1}^\epsilon (y) = (\epsilon y - k)Q_k^\epsilon (y) + \frac{dQ_k^\epsilon}{dy}\psi (y).
\end{equation}

\begin{rmk}\rm\label{rmkQ2FS}
If $g$ is extremal and $Q_2^\epsilon (y)$ vanishes identically on $(y_{\inf}, y_{\sup})$ then $(M, g)$ is a complex space form.
Indeed by  \eqref{Qk} with $k=1$ we get $\psi (y)=y-\epsilon y^2$
and so by Lemma \ref{lemmasimple}, $g$ has constant holomorphic sectional curvature.
\end{rmk}

Notice that  for each $k$, $Q_k^\epsilon (y)$ are functions defined in the open interval $(y_{\inf}, y_{\sup})$
We claim that
\begin{equation}\label{Pk}
 Q_{k}^\epsilon (y) =y\prod_{j=1}^{k-1}(\epsilon y- j) + \frac{\psi  (y)P_k^\epsilon (y)}{y^{(k-2)n}},
\end{equation}
where $P_k^\epsilon(y)$ is a polynomial in 
$y$ with coefficients depending on $A, B, C, D$ and 
$\epsilon$ and the above product  equals $1$ when $k=1$.

We prove our claim by induction. 
For $k= 1$, we have $Q_1^\epsilon(y)  = y$ so that \eqref{Pk}  is verified with $P_1^\epsilon (y) = 0$. Now, assume that \eqref{Pk} holds true for some $k>1$. Then, by using \eqref{Qk} 
 one can easily verify that 
\begin{equation*}
Q_{k+1}^\epsilon(y) =  y\prod_{j=1}^{k}(\epsilon y- j) + \frac{\psi(y)  P_{k+1}^\epsilon (y)}{y^{(k-1)n}},
\end{equation*}
where
\begin{equation}\label{recursive}P_{k+1}^\epsilon(y)=y^n(\epsilon y-k)P_k^\epsilon (y) + y^{(k-1)n} \frac{d}{dy} \left( y \prod_{j=1}^{k-1}(\epsilon y- j) \right)+R^\epsilon_k(y),\end{equation}
and 
$$R_k^\epsilon(y)=y\frac{d\left[y^{n-1}\psi(y)\right]}{dy}P_k^\epsilon (y)+y^{n}\psi(y)\frac{dP_k^\epsilon}{dy} -\left[(k-1)n-1\right]y^{n-1}\psi(y) P_k^\epsilon (y).$$
Observe that $R_k^\epsilon(y)$ is a polynomial in $y$ since  $y^{n-1}\psi(y)$ is a polynomial by  \eqref{ypsi}.
Thus $P_{k+1}^\epsilon (y)$ is a polynomial in $y$ proving our claim.

Notice  that $Q_k(y)$ can be written as a finite expression
\begin{equation}\label{Qknew}
Q_k^\epsilon(y)=\sum_{h=-s}^tq_hy^h, \ s, t\in\N,
\end{equation}
where $q_h:=q_h (k, A, B, C, D, \epsilon)$ are real numbers
such that $q_h=0$ for all $h<-s$
and $h>t$.
We say that $t$  is the  {\em degree} of $Q_k^\epsilon(y)$, $q_t$ its {\em leading term}, $-s$ its {\em lower degree} and $q_s$ its {\em lower term}, respectively.

The properties of $Q_k^\epsilon(y)$, $y\in (y_{\inf}, y_{\sup})$, needed in the proof of our main results are summarized in the following two lemmata (Lemma \ref{leml} and Lemma \ref{lemlem}) and in the two corresponding propositions  (Proposition \ref{lemAB0} and Proposition \ref{lemAB01}).
\begin{lem}\label{leml}
Let  $g$ be a radial extremal metric  on a complex manifold $M$ of complex dimension $n$. Assume that $g$ is $\epsilon$-resolvable, then  we have:
\begin{itemize}
\item [(a1)]
If $n=1$,
$$\epsilon^I\det\left(\sum_{i=0}^\alpha {\alpha\choose i}\frac{\beta!}{(\beta-i)!}Q_{\alpha+\beta-i}^\epsilon \right)_{1\leq\alpha,\beta\leq I}\geq0,\quad\forall I\in\Z^+ \text{if $\epsilon\neq 0$,}$$
$$\det\left(\sum_{i=0}^\alpha {\alpha\choose i}\frac{\beta!}{(\beta-i)!}Q^0_{\alpha+\beta-i} \right)_{1\leq\alpha,\beta\leq I}\geq0,\quad\forall I\in\Z^+ \text{if $\epsilon= 0$.}$$
\item [(a2)]
If $n\geq 2$,
$Q_k^\epsilon (y)\geq 0$,
for every positive  index $k$. 
\end{itemize}
Moreover  if $g$ is $\epsilon$-resolvable of finite rank, then
there exists an index $I$ such that:
\begin{itemize}
\item [(b1)]
If $n=1$,
\begin{equation}
\label{dim1}\det\left(\sum_{i=0}^\alpha {\alpha\choose i}\frac{\beta!}{(\beta-i)!}Q_{\alpha+\beta-i}^\epsilon \right)_{1\leq\alpha,\beta\leq h}=0, \ \forall h\geq I,
\end{equation}
\item [(b2)]
If $n\geq 2$,
\begin{equation}\label{indexI}
 Q_k^\epsilon (y)\equiv 0, \ \forall h\geq I.
 \end{equation} 
 \end{itemize}
 \end{lem}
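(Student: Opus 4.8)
The plan is to transfer the two conclusions of Lemma \ref{resol} — the positivity of the $r$-derivatives (resp. of the Hankel-type determinants) of $\F$ and their eventual identical vanishing in the finite rank case — into the corresponding statements for the rational functions $Q_k^\epsilon(y)$, using the identities \eqref{per1}, \eqref{per2} and \eqref{gkdef} which express every $r$-derivative of $\F$ through $Q_k^\epsilon(y)$. I would treat the regimes $n\ge 2$ and $n=1$ separately: the first is an immediate substitution, while the second rests on a combinatorial identity for the mixed holomorphic derivatives of $\F(z\bar z)$.

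For $n\ge 2$ I would argue directly. By \eqref{per1} and \eqref{gkdef}, for $\epsilon=\pm1$ one has $\frac{d^k\F}{dr^k}=\epsilon\,\frac{Q_k^\epsilon(y)}{r^k}\,\F(r)$, while $\epsilon\F=\epsilon\,\epsilon e^{\epsilon f}=e^{\epsilon f}>0$; by \eqref{per2} the case $\epsilon=0$ reads $\frac{d^k F_0}{dr^k}=\frac{Q_k^0(y)}{r^k}$. In all three cases $\frac{d^k\F}{dr^k}$ is a strictly positive multiple of $Q_k^\epsilon(y)$, so the sign of the $k$-th derivative coincides with that of $Q_k^\epsilon$. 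Hence the inequality $\frac{d^k\F}{dr^k}\ge0$ of Lemma \ref{resol} gives (a2), and the identical vanishing $\frac{d^h\F}{dr^h}\equiv0$ in the finite rank case forces $Q_h^\epsilon\equiv0$, i.e. (b2).

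The heart of the lemma is the case $n=1$, where $r=z\bar z$ and $\F$ must be differentiated in $z$ and $\bar z$. First I would establish, by a one-line induction using $\partial_{\bar z}r=z$, that $\partial_{\bar z}^\beta\,\F(z\bar z)=z^\beta\,\F^{(\beta)}(r)$, and then, applying Leibniz' rule to $\partial_z^\alpha\big(z^\beta\F^{(\beta)}(r)\big)$ together with $\partial_z^{\alpha-i}\F^{(\beta)}(r)=\bar z^{\alpha-i}\F^{(\beta+\alpha-i)}(r)$, the closed form
\begin{equation*}
\frac{\partial^{\alpha+\beta}\F}{\partial z^\alpha\partial\bar z^\beta}=\sum_{i=0}^{\min(\alpha,\beta)}\binom{\alpha}{i}\frac{\beta!}{(\beta-i)!}\,z^{\beta-i}\bar z^{\alpha-i}\,\F^{(\alpha+\beta-i)}(r).
\end{equation*}
Writing $z^{\beta-i}\bar z^{\alpha-i}=z^\beta\bar z^\alpha r^{-i}$ and inserting $\F^{(k)}=\epsilon\,\frac{Q_k^\epsilon(y)}{r^k}\F$ (or $F_0^{(k)}=Q_k^0/r^k$ when $\epsilon=0$) shows that the $(\alpha,\beta)$ entry factors as $z^\beta\bar z^\alpha\,r^{-(\alpha+\beta)}\,(\epsilon\F)\,\sum_i\binom{\alpha}{i}\frac{\beta!}{(\beta-i)!}Q^\epsilon_{\alpha+\beta-i}(y)$, the decisive point being that the $z$-dependence splits into a per-column factor $z^\beta$ and a per-row factor $\bar z^\alpha$.

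Pulling these out of the $I\times I$ determinant then contributes $\prod_{\beta}z^\beta\prod_{\alpha}\bar z^\alpha=r^{I(I+1)/2}$, the powers $r^{-\alpha},r^{-\beta}$ contribute $r^{-I(I+1)}$, and the common factor $\epsilon\F=e^{\epsilon f}>0$ contributes $(\epsilon\F)^I>0$; each of these is positive. Thus $\det\big(\partial^{\alpha+\beta}\F/\partial z^\alpha\partial\bar z^\beta\big)_{1\le\alpha,\beta\le I}$ equals an explicitly positive multiple of $\det\big(\sum_i\binom{\alpha}{i}\frac{\beta!}{(\beta-i)!}Q_{\alpha+\beta-i}^\epsilon\big)_{1\le\alpha,\beta\le I}$, so the sign statement (a1) and its finite rank vanishing counterpart (b1) follow at once from Lemma \ref{resol}, the $\epsilon=0$ line being the same computation with the factor $\epsilon\F$ replaced by $1$. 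I expect the main obstacle to be exactly this determinant bookkeeping: one must verify the row/column separation of the $z,\bar z$ powers and track the sign of $\F$, which equals $\epsilon$ since $\F=\epsilon e^{\epsilon f}$ — it is this sign that governs the $\epsilon^I$ prefactor recorded in the statement.
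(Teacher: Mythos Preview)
Your proposal is correct and follows essentially the same approach as the paper: compute the mixed holomorphic derivatives of $\F(z\bar z)$ via Leibniz (the paper records the identical formula for $\beta\ge\alpha$), substitute \eqref{per1}, \eqref{per2}, \eqref{gkdef}, and invoke Lemma~\ref{resol}; the $n\ge2$ case is the direct substitution you describe. Your write-up is considerably more explicit than the paper's one-line ``Thus (a1) and (b1) follow\ldots'', in particular you carry out the row/column factorisation of the determinant that the paper leaves implicit.

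One small point of bookkeeping: your own computation shows that the common scalar extracted from each entry is $\epsilon\F=e^{\epsilon f}>0$, so the overall prefactor $(\epsilon\F)^I$ is strictly positive and yields $\det(\cdots)\ge0$ with \emph{no} residual sign. Your closing sentence, which attributes the $\epsilon^I$ in the statement to ``the sign of $\F$'', is therefore at odds with the factorisation you just did; what you have actually proved is the cleaner inequality $\det(\cdots)\ge0$ for all $\epsilon\in\{-1,0,1\}$. This is consistent with the paper's (terse) argument and with how the lemma is subsequently used.
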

 \begin{proof}
 Notice that for $n=1$ and $\beta\geq\alpha$ we can write
$$ \frac{\partial^{\alpha+\beta}\F}{\partial z^\alpha \partial\bar z^\beta}= \frac{\partial^{\alpha}}{\partial z^\alpha }\left(z^\beta \frac{d^{\beta}\F}{d r^\beta } \right)=\sum_{i=0}^\alpha {\alpha\choose i}\frac{\beta!}{(\beta-i)!}z^{\beta-i}\bar z^{\alpha-i}  \frac{d^{\alpha+\beta-i}\F}{d r^{\alpha+\beta-i} }.$$
Thus (a1) and (b1)  follow by taking into account \eqref{per1}, \eqref{per2}, \eqref{gkdef} and    Lemma \ref{resol} for $n=1$.
Similarly (a2) and (b2)  follow by  Lemma \ref{resol} for $n\geq 2$.
\end{proof}

\begin{rmk}\rm\label{Q11}
Using Lemma \ref{leml} one can show that some specific radial \K\ metric cannot be induced by  a complex space form.
For example the KE metric $g$ in Example \ref{exKENWB} is not projectively induced since one can check via computer's aid that the associated rational function $Q^1_{11}(y)$
is stricly negative on a right neighborhood of $y=1$. In order to give further evidence of  the validity of Conjecture 2  one could try to show that $g$ cannot be induced by  $(\C P^\infty, g_c^\infty)$, for all $c>0$, or equivalently to show that $\alpha g$ in not projectively induced for any $\alpha >0$. This does not seem to be  an easy task. 
\end{rmk}

\begin{lem}\label{lemlem}
Let  $g$ be a radial extremal metric  on a complex manifold $M$ of complex dimension $n\geq 2$. Then
for  $k\geq 2$ we have\footnote{For $k=1$, one has $Q_1^\epsilon(y)=y$ and hence its  the leading and the lower term concide and are equal to $1$.}:
\begin{itemize}
\item
 the degree of $Q^\epsilon_k(y)$ is equal to $2k-1$ and its  leading term  is
\begin{equation}\label{coeffD}
-D^{k-1}\prod_{j=2}^{k-1 }(1-2j);
\end{equation}
\item
the   lower degree of $Q^\epsilon_k(y)$  is $n(1-k)+1$ and its lower term  is
\begin{equation}\label{coeffA}
-A^{k-1}(k-2)!\prod_{j=1}^{k-2}\left(n-\frac{1}{j}\right).
\end{equation}
\end{itemize}

In particular 
\begin{equation}\label{k2}
Q^\epsilon_2(y)= - \frac{A}{ y^{ n-1}} - \frac{B}{ y^{ n-2}}
+(\epsilon-C) y^2- D y^3 \end{equation}
and 
$$Q_3^\epsilon (y)=
\frac{A^2(1-n)}{y^{2n-1}}+\frac{AB(3-2n)}{y^{2n-2}}+
\frac{B^2(2-n)}{y^{2n-3}}+\frac{A(n+1)}{y^{n-1}}+$$
$$+\frac{A\left[C\left(3-n\right)-3\epsilon\right]+Bn}{y^{n-2}}+
\frac{(AD+BC)(4-n)-3B\epsilon}{y^{n-3}}+
\frac{BD(5-n)}{y^{n-4}}+$$
\begin{equation}\label{k3}
+(2C^2-3C\epsilon-D+\epsilon^2)y^3+D(5C-3\epsilon)y^4+
3 D^2 y^{5}.
\end{equation}

Moreover, the following  facts hold true:
\begin{itemize}
\item [(i)]
when $D=0$ the degree of $Q_k^\epsilon(y)$ 
is equal to $k$  and its  leading term is 
$$(-1)^{k-1}(k-1)!\prod_{j=1}^{k-1}\left( C-\frac{\epsilon}{j}\right).$$
\item [(ii)]
when $A=0$, the lower degree of  $Q_k^\epsilon(y)$ 
is equal to  $n+k-nk$ and its lower term is 
$$-B^{k-1}(k-2)!\prod_{j=1}^{k-2}\left(n-\frac{j+1}{j}\right).$$
\end{itemize}
\end{lem}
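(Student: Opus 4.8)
The plan is to prove all the assertions by induction on $k$ from the recursion \eqref{Qk},
\[
Q_{k+1}^\epsilon(y)=(\epsilon y-k)Q_k^\epsilon(y)+\frac{dQ_k^\epsilon}{dy}\,\psi(y),\qquad Q_1^\epsilon(y)=y,
\]
using the explicit shape of $\psi$ in \eqref{ypsi}. The key observation is that $\psi(y)$ is a Laurent polynomial whose top term is $-Dy^3$ (degree $3$) and whose bottom term is $-Ay^{1-n}$ (degree $1-n$, since $n\geq 2$). Consequently the two summands of the recursion act very differently at the two ends of the degree range, and the whole proof reduces to deciding, separately at the top and at the bottom, which summand dominates and then reading off the resulting one-step recursion for the extreme coefficient. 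The explicit formulas \eqref{k2} and \eqref{k3}, which also serve as base cases, follow by applying the recursion once and twice to $Q_1^\epsilon(y)=y$.

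For the leading term, note that $(\epsilon y-k)Q_k^\epsilon$ raises the degree by at most one, whereas in $\frac{dQ_k^\epsilon}{dy}\,\psi(y)$ differentiation lowers the degree by one and multiplication by the top term $-Dy^3$ raises it by three, a net gain of two. Hence the top of $Q_{k+1}^\epsilon$ comes solely from $\frac{dQ_k^\epsilon}{dy}\,\psi(y)$, with no competitor. Writing $\deg Q_k^\epsilon=2k-1$ with top coefficient $c_k$---which survives differentiation since $2k-1\neq 0$---gives $\deg Q_{k+1}^\epsilon=2k+1$ and the recursion $c_{k+1}=-D(2k-1)c_k$, which telescopes to \eqref{coeffD}.

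The lower term is the mirror image. Now $(\epsilon y-k)Q_k^\epsilon$ does not lower the degree, while in $\frac{dQ_k^\epsilon}{dy}\,\psi(y)$ differentiation and multiplication by $-Ay^{1-n}$ together lower it by $n$. As $n\geq 2$, the bottom of $Q_{k+1}^\epsilon$ again comes only from $\frac{dQ_k^\epsilon}{dy}\,\psi(y)$; writing the lower degree of $Q_k^\epsilon$ as $n(1-k)+1$---negative, hence surviving differentiation, for $k\geq 2$---one obtains lower degree $n(1-(k+1))+1$ for $Q_{k+1}^\epsilon$ and the recursion $a_{k+1}=-A\big(n(1-k)+1\big)a_k$, which telescopes to \eqref{coeffA}.

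The two special cases record what happens at an end when the corresponding extreme term of $\psi$ disappears. If $D=0$ the top term of $\psi$ becomes $-Cy^2$, so now \emph{both} summands reach the common top degree $k+1$; adding their contributions gives $b_{k+1}=(\epsilon-kC)b_k$, which telescopes to the product in (i). If $A=0$ the bottom term of $\psi$ becomes $-By^{2-n}$, and the same bottom analysis yields lower degree $n+k-nk$ with $\tilde a_{k+1}=-B(n+k-nk)\tilde a_k$, telescoping to (ii). I expect the main obstacle to be exactly this bookkeeping of competing summands---in particular correctly combining the two that tie in case (i)---together with the observation that differentiation annihilates a lowest-degree term precisely when its degree is $0$, which a short check shows occurs only at the degenerate values (such as $A=0$, $n=2$) where $\psi$, and hence every $Q_k^\epsilon$, is an honest polynomial; there the factor in the coefficient recursion vanishes, so the product formulas are to be read as giving the coefficient at the indicated extreme degree, which the telescoping correctly sets to zero.
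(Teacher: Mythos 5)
Your proof is correct and follows essentially the same route as the paper's, which simply states that the lemma follows by straightforward computations using \eqref{Pk}, \eqref{recursive} and induction on $k$; you induct directly on the recursion \eqref{Qk} rather than passing through the auxiliary polynomials $P_k^\epsilon$, which is an equivalent (and arguably cleaner) piece of bookkeeping, and your dominance analysis of the two summands at the top and bottom degrees checks out against \eqref{k2} and \eqref{k3}. Your closing remark also correctly identifies and resolves the only delicate point, namely that in case (ii) with $n=2$ the claimed extreme degree can be $0$ (so differentiation kills that term) and the stated ``lower term'' must then be read as the vanishing coefficient at the indicated degree, with the induction carried on the hypothesis that all coefficients below that degree are zero.
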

\begin{proof}
 It  can be  obtained
 by  straightforward computations, using
\eqref{Pk} and \eqref{recursive}
and the induction on $k$.
\end{proof}

\begin{prop}\label{lemAB0}
Let $g$ be  a radial extremal metric on a complex manifold $M$
of complex dimension $n\geq 2$.
 Assume that $g$ is well-behaved and 
that  $g$ is projectively induced. Then  then $A=B=0$ in (\ref{ypsi}).
\end{prop}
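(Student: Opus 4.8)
The plan is to exploit the non-negativity of the rational functions $Q_k^1(y)$ near the left endpoint of their interval of definition. Since $g$ is projectively induced it is $1$-resolvable of some rank $N\le\infty$, and since $n\ge 2$, part (a2) of Lemma \ref{leml} gives $Q_k^1(y)\ge 0$ for every $k\ge 1$ and every $y\in(y_{\inf},y_{\sup})$. The well-behaved hypothesis says precisely that $y_{\inf}=0$, so this interval is $(0,y_{\sup})$ and I am free to examine the behaviour of each $Q_k^1$ as $y\to 0^+$. The whole argument then reduces to reading off, from Lemma \ref{lemlem}, the lowest-degree term of $Q_k^1$ and imposing that its sign be compatible with $Q_k^1\ge 0$: if the lowest term is $c\,y^{-s}$ with $s>0$, then $Q_k^1(y)$ has the sign of $c$ for small $y>0$, so $c\ge 0$.

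First I would prove $A=0$ using $Q_3^1$. By \eqref{k3} (equivalently, by the lower-term formula \eqref{coeffA}) the term of $Q_3^1$ of smallest degree is $A^2(1-n)\,y^{1-2n}$, and $1-2n<0$ for $n\ge 2$, so this term dominates as $y\to 0^+$. Non-negativity of $Q_3^1$ near $0$ therefore forces $A^2(1-n)\ge 0$; since $1-n<0$ this is possible only when $A=0$. Note that $Q_2^1$ alone, whose lower term is $-A$, would give only $A\le 0$, which is why I pass to $k=3$.

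With $A=0$ in hand I would next prove $B=0$, splitting according to $n$. For $n\ge 3$ the lowest-degree term of $Q_3^1$ becomes $B^2(2-n)\,y^{3-2n}$ with $3-2n<0$, so the same dominance argument yields $B^2(2-n)\ge 0$, and $2-n<0$ forces $B=0$. The delicate case is $n=2$: here the coefficient $B^2(2-n)$ vanishes identically, so the lower-term analysis of $Q_3^1$ carries no information about $B$ (indeed, because of the factor $n-\tfrac{j+1}{j}$, which vanishes at $j=1$ when $n=2$, the lower term of $Q_k^1$ degenerates for $n=2$ and every $k\ge 3$). Instead I would compare constant terms: specializing \eqref{k2} and \eqref{k3} to $n=2$, $A=0$ gives $\lim_{y\to 0^+}Q_2^1(y)=-B$ and $\lim_{y\to 0^+}Q_3^1(y)=2B$, and the two non-negativity conditions $-B\ge 0$ and $2B\ge 0$ together force $B=0$.

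The main obstacle is exactly this $n=2$ degeneration: the uniform lower-term mechanism that settles $A$ for all $n$ and $B$ for $n\ge 3$ breaks down, and one must instead extract the opposing sign constraints $-B\ge 0$ and $2B\ge 0$ from the two lowest functions $Q_2^1$ and $Q_3^1$ evaluated in the limit $y\to 0^+$. I would also emphasize that well-behavedness is used essentially: without $y_{\inf}=0$ one cannot pass to the limit $y\to 0^+$, and the conclusion genuinely fails for metrics with $y_{\inf}>0$.
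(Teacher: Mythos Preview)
Your proof is correct and follows essentially the same route as the paper: use the lower-degree term of $Q_3^1$ to force $A=0$, then (for $n\ge 3$) the next lower term of $Q_3^1$ to force $B=0$, and for $n=2$ squeeze $B$ between the limits $\lim_{y\to 0^+}Q_2^1(y)=-B$ and $\lim_{y\to 0^+}Q_3^1(y)=2B$. Your remark that $Q_2^1$ alone gives only $A\le 0$ is a correct and useful observation.
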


\begin{proof}
Since $g$ is well-behaved $y_{\inf}=0$.
Assume by a contradiction that $A\neq 0$.
By  equation \eqref{k3} with $\epsilon =1$  one gets 
$$y^{2n-1}Q^1_3(y)\rightarrow A^2(1-n)< 0,\  \  \mbox{for}\ \ y\rightarrow y_{\inf}^+=0^+.$$
Then we deduce that $Q^1_3(y)$ would be negative in a right neighborhood of $0$ in contrast with Lemma \ref{leml}.
Expression \eqref{k3} with $A=0$ yields
$$y^{2n-3}Q^1_3(y)\rightarrow B^2(2-n),\  \  \mbox{for}\ \ y\rightarrow 0^+.$$
Then, if $n>2$, by the same argument just used to show the vanishing of $A$, one sees that it must be $B=0$.

It remains to treat the case $n=2$. On the one hand  $Q_3^1 (y)$ with $A=0$ and $n=2$ becomes a polynomial expression with constant term equals to $2B$ and so 
$$Q^1_3(y)\rightarrow 2B,\  \  \mbox{for}\ \ y\rightarrow 0^+.$$
Then by Lemma \ref{leml} one deduces  that $B \geq 0$.
 On the other hand (\ref{k2}) with $A=0$ and $n=2$ rewrites 
$$Q_2^1(y) =  -B - D y^3 + (1-C)y^2.$$
 Again by letting $y \rightarrow 0^+$ and by Lemma \ref{leml}  one gets $B \leq 0$. So we deduce $B=0$. The proposition  is proved.
\end{proof}

\begin{rmk}\rm
Example \ref{exfond} in Subsection \ref{Examples} with $n=1$  shows that the assumption $n\geq 2$  in Lemma \ref{lemAB0}   is necessary. 
Moreover   Example \ref{simanca} and Example \ref{partbal}  indicate that in  the lemma the well-behaveness condition cannot be dropped.
\end{rmk}

\begin{prop}\label{lemAB01}
Let $g$ be  a radial extremal metric on a complex manifold $M$
of complex dimension $n\geq 2$.
 If $g$ is $\epsilon$-resolvable with $\epsilon\leq 0$ then 
 $A=B=0$ in (\ref{ypsi}).
\end{prop}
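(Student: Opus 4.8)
The plan is to split the argument into two steps: first show that the hypothesis $\epsilon\le 0$ already forces $g$ to be well-behaved, i.e. $y_{\inf}=0$, and then observe that once $y_{\inf}=0$ the limit argument in the proof of Proposition \ref{lemAB0} applies verbatim, since none of the computations there actually used the value of $\epsilon$.

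For the first step I would argue by contradiction. Suppose $g$ is not well-behaved, so that $y_{\inf}>0$; by Remark \ref{WBbis} (that is, by (i) of Lemma \ref{ANAL}) this gives $\psi(y_{\inf})=0$. Comparing \eqref{k2} with \eqref{ypsi} yields the identity
\[
Q_2^\epsilon(y)=\psi(y)+y(\epsilon y-1),
\]
and since $y_{\inf}>0$ both sides extend continuously to $y_{\inf}$, so that
\[
\lim_{y\to y_{\inf}^+}Q_2^\epsilon(y)=y_{\inf}(\epsilon y_{\inf}-1).
\]
For $\epsilon\le 0$ and $y_{\inf}>0$ the factor $\epsilon y_{\inf}-1$ is at most $-1$, so this limit is strictly negative, contradicting $Q_2^\epsilon(y)\ge 0$, which holds on $(y_{\inf},y_{\sup})$ by (a2) of Lemma \ref{leml} (recall $n\ge 2$). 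Hence $y_{\inf}=0$.

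For the second step, with $y_{\inf}=0$ in hand, I would examine $Q_3^\epsilon$ and $Q_2^\epsilon$ as $y\to 0^+$, exactly as in the proof of Proposition \ref{lemAB0}. From \eqref{k3} one gets $y^{2n-1}Q_3^\epsilon(y)\to A^2(1-n)$, which is negative unless $A=0$; positivity of $Q_3^\epsilon$ then forces $A=0$. Setting $A=0$ in \eqref{k3} yields $y^{2n-3}Q_3^\epsilon(y)\to B^2(2-n)$, which disposes of the case $n>2$. For the remaining case $n=2$ (with $A=0$) one reads from \eqref{k3} and \eqref{k2} that $Q_3^\epsilon(y)\to 2B$ and $Q_2^\epsilon(y)\to -B$ as $y\to 0^+$, so positivity of both gives simultaneously $B\ge 0$ and $B\le 0$, whence $B=0$.

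The main obstacle is the first step: unlike in Proposition \ref{lemAB0}, well-behavedness is not assumed here and must be produced out of the sign hypothesis on $\epsilon$. The decisive point is that the boundary value $Q_2^\epsilon(y_{\inf})=y_{\inf}(\epsilon y_{\inf}-1)$ is the only sign information available at $y_{\inf}$, and it is exactly the assumption $\epsilon\le 0$ that renders it negative when $y_{\inf}>0$. For $\epsilon=1$ this quantity can vanish or be positive (as happens, for instance, when $y_{\inf}=1$, which is precisely the situation of the elliptic Examples \ref{simanca} and \ref{partbal}), so the elliptic case genuinely requires well-behavedness to be imposed separately.
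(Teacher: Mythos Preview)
Your argument is correct. The first step---forcing $y_{\inf}=0$ by computing $Q_2^\epsilon(y_{\inf})=y_{\inf}(\epsilon y_{\inf}-1)$ via $\psi(y_{\inf})=0$---is exactly what the paper does (they write the same quantity as $-y_{\inf}(|\epsilon|y_{\inf}+1)$, using \eqref{Pk} for $k=2$ rather than \eqref{k2}, but it is the same computation).

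The second step is where you diverge. The paper does not redo the limit computations of Proposition~\ref{lemAB0} for general $\epsilon$; instead it first argues that an $\epsilon$-resolvable metric with $\epsilon\le 0$ is automatically infinitely projectively induced (invoking the external results in \cite{DHL}), and then applies Proposition~\ref{lemAB0} as a black box with $\epsilon=1$. Your route---observing that the lowest-order coefficients of $Q_2^\epsilon$ and $Q_3^\epsilon$ appearing in those limits are independent of $\epsilon$, so the same argument runs for the given $\epsilon$ directly via (a2) of Lemma~\ref{leml}---is more self-contained: it avoids the appeal to \cite{DHL} entirely. The paper's route is more modular (it reuses Proposition~\ref{lemAB0} verbatim) but at the cost of an outside reference. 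Both are perfectly sound.
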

\begin{proof}
By the very  definition of $\epsilon$-resolvability the \K\ manifold $(M, g)$ can be \K\ immersed into the finite or infinite dimensional flat or complex hyperbolic space.
It follows either  by Remark \ref{remarkx}  in the finite dimensional  case or by  \cite[Lemma 5 and Lemma 8]{DHL} in the infinite dimensional case that   $g$ is infinitely  projectively induced.
Thus, the proof will be ended if we show that $g$ is well-behaved so to apply Proposition \ref{lemAB0}.
Assume by contradiction this is not the case, i.e.  $y_{\inf}>0$. Then by  (i) of 
 Lemma \ref{ANAL} (cfr. Remark \ref{WBbis}) one has $\lim_{y\to y_{\inf}^+} \psi(y)=0$
 which combined with 
\eqref{Pk} for $k=2$ and 
the fact that, by assumption,  the metric is $\epsilon$-resolvable with 
$\epsilon\leq 0$, give $\lim_{y \to y_{\inf}^+}Q_2^\epsilon (y)=- y_{\inf}(|\epsilon| y_{\inf}+1)<0$, in contrast with Lemma  \ref{resol}.
\end{proof}

\subsection{The proofs of Theorem \ref{mainteor}, 
Theorem \ref{mainteor2} and Theorem \ref{mainteor3}.}\label{subproofs}
\begin{proof}[Proof of Theorem \ref{mainteor}]
By multiplying the metric $g$ by
$\frac{c}{2}$ (if $c\neq 0$) we can assume
 the ambient space is one of the following: $(\C H^n, g_{hyp}^n)$, $(\C^n, g_0^n)$, 
$(\C P^n, g^n_{FS})$ and so the metric $g$ is $\epsilon$-resolvable with $\epsilon =-1, 0, 1$, respectively.
In order to prove (1) and (2)  of Theorem \ref{mainteor} it is enough to  show that 
$g$ has constant holomorphic sectional curvature and then to  appeal to  
Calabi's classification \cite{calabi} of  \K\ submanifolds of  finite dimensional complex space forms.

We consider the cases $n=1$ and $n\geq 2$ separately.

\vskip 0.1cm

\noindent
\underline{{Case $n=1$.}}
We are going to  show that  $D=0$: this will suffice since  by (a) of Lemma \ref{lemmasimple}
this would imply $g$ is cscK and hence, since $n=1$, $g$ has constant holomorphic sectional curvature. 
In order to show the vanishing of $D$ observe that 
by  Lemma \ref{lemlem} (in the notation of Lemma \ref{leml}) one has 
$$\deg\left(\sum_{i=0}^\alpha {\alpha\choose i}\frac{\beta!}{(\beta-i)!}Q_{\alpha+\beta-i}^\epsilon (y)  \right)=\deg Q_{\alpha+\beta}^\epsilon(y)=2(\alpha+\beta)-1.$$
Since the metric $g$ is $\epsilon$-resolvable of finite rank we can pick 
an index $I$ such that \eqref{dim1} holds true, 
namely
\begin{equation}
\label{dim1bis}\det\left(\sum_{i=0}^\alpha {\alpha\choose i}\frac{\beta!}{(\beta-i)!}Q_{\alpha+\beta-i}^\epsilon (y)\right)_{1\leq\alpha,\beta\leq I}=0.
\end{equation}
If $\sigma$ is an arbitrary chosen permutation on $I$ indices
 then the degree of $\prod_{\alpha=1}^I Q_{\alpha+\sigma(\alpha)}^\epsilon (y)$
 does not depend on the permutation $\sigma$:  indeed
 $$\deg\left( \prod_{\alpha=1}^I Q_{\alpha+\sigma(\alpha)}^\epsilon\right)=2\sum_{\alpha=1}^I\big(\alpha+\sigma(\alpha)\big)-I= 2I^2+I.$$
 
Therefore  the leading term of the left hand side of \eqref{dim1bis}
is given by the determinant of the leading terms of the  $Q_{\alpha+\beta}^\epsilon$.
By \eqref{coeffD} of  Lemma \ref{lemlem} this is given by
$$\det\left(-D^{\alpha+\beta-1}\prod_{j=2}^{\alpha+\beta-1}(1-2j) \right)_{1\leq\alpha,\beta\leq I}$$
and, by a  straightforward computation, this is equal to
$$(-1)^I(-2)^{\frac{I}{2}(I-1)} D^{I^2}\prod_{j=2}^{I}(1-2j)^{I-j+1}\prod_{2\leq j<k\leq I+1}(k-j).$$
Hence, by  \eqref{dim1bis}  $D$ is forced to be $0$.

\vskip 0.1cm

\noindent
\underline{{Case $n\geq 2$.}} Let  $I\in \Z^+$ be the minimal\footnote{The minimality  of $I$ will be used only for the cases $n=2$ and $\epsilon=1$ at the end  of the proof.} index such that 
\begin{equation}\label{QIeps}
Q_I^\epsilon \equiv 0,
\end{equation}
whose existence is guaranteed by \eqref{indexI} in Lemma \ref{leml}.
If $I=2$, and hence $Q_2^\epsilon \equiv 0$,  Remark \ref{rmkQ2FS} implies that  $g$ has constant holomorphic sectional curvature and so  the theorem is proved.  
Hence we can assume $I\geq 3$.
We deduce that  the leading term and the lower term  of $Q_I^\epsilon(y)$
must vanish.
By \eqref{coeffD} and \eqref{coeffA} of Lemma  \ref{lemlem} they are given respectively by
$-D^{I-1}\prod_{j=2}^{I-1 }(1-2j)$
and $-A^{I-1}(I-2)!\prod_{j=1}^{I-2}\left(n-\frac{1}{j}\right)$. Hence we deduce  that $A=D=0$.
By  (c) of Lemma \ref{lemmasimple} 
the proof of the theorem will be completed  if we show that  also $B=0$.
Notice that for $\epsilon\leq 0$,
Proposition \ref{lemAB01} implies that $B=0$.
Therefore it remains to show that $B=0$  when  $\epsilon =1$ (and $A=D=0$).

We distiguish two cases: $n\neq 2$ and $n=2$.

Assume $n\neq 2$.  By  (i)  of Lemma \ref{lemlem} the condition  $D=0$ implies that  the  leading term of  $Q_I^1(y)$ 
is given by
\begin{equation}\label{BI}
-B^{I-1}(I-2)!\prod_{j=1}^{I-2}\left(n-\frac{j+1}{j}\right).
\end{equation}
Therefore (since   $n\neq 2$)  it follows  by \eqref{QIeps} and \eqref{BI}   that $B=0$ and we are done\footnote{
This argument works also for $\epsilon\leq 0$ since \eqref{BI} does not depend on $\epsilon$.}.

Assume $n=2$.  In this case equation  \eqref{ypsi} with $A=D=0$
is a polynomial of degree $2$ in $y$, namely
\begin{equation}\label{parabola}
\psi (y)=-Cy^2+y-B.
\end{equation}
Since  $A=0$ by (ii) of  Lemma \ref{lemlem}  the lower term  of  $Q_I^1(y)$ 
is  given  by 
\begin{equation}\label{CI}
(-1)^{I-1}(I-1)!\prod_{j=1}^{I-1}\left( C-\frac{1}{j}\right).
\end{equation}
 By \eqref{QIeps}  (with $\epsilon=1$) and \eqref{CI}   it follows  that
 \begin{equation}\label{Cset} 
C\in\left\{1,\frac{1}{2},\mathellipsis,\frac{1}{I-1}\right\}.
\end{equation}
In particular $C>0$\footnote{In accordance to the fact that we will show $(M, g)$ is an elliptic complex space form.} and 
since  $\psi(y)>0$  we deduce that 
$$0\leq y_1\leq y_{\inf}<y(t)<y_{\sup}\leq y_2<+\infty,\  \forall t\in (t_{\inf}, t_{\sup}),$$ 
where $y_1$ and  $y_2$  are the   two distinct  roots of \eqref{parabola}.
Moreover \eqref{QIeps} and \eqref{Pk} (with $\epsilon=1$) yield
\begin{equation}\label{Nmin1}\psi(y)P_{I}^1(y)+y^{(I-2)n+1}\prod_{j=1}^{I-1}(y-j)\equiv 0
\end{equation}
from which it follows that 
\begin{equation}\label{sjset}
y_j\in\{0, \dots , I-1\}, \ j= 1, 2.
\end{equation}

The proof of the theorem  will be ended if  $y_1=0$ since in this case $B=0$.
 Let us  suppose by contradiction that $y_1 \neq 0 $.
Thus
 \begin{equation}\label{s2min}
 y_2\in\{0, \dots , I-2\},
 \end{equation}
 being  $y_1+y_2=\frac{1}{C}$    at most equal to $I-1$  by \eqref{Cset}.
 Moreover $y_{\inf}\neq 0$  (since $0\leq y_1\leq y_{\inf}$).
 Thus by  (i) (resp. (ii)) of Lemma \ref{ANAL} it follows $\psi (y_{\inf})=0$ (resp. $\psi (y_{\sup})=0$) and hence $y_1=y_{\inf}$ (resp. $y_2=y_{\sup}$).
 
Since $n= 2$ and  $I-1\geq 2$ we can apply  Lemma \ref{leml}
and \eqref{Qk} to obtain 
\begin{equation}\label{QI-1}
Q_{I-1}^1(y)=y \prod_{j=1}^{I-2}(y-j) +\frac{\psi(y)  P^1_{I-1}(y)}{y^{(I-3)n}}\geq 0, \ \forall
y\in (y_{\inf}, y_{\sup})=(y_1, y_2).
\end{equation}  
and
\begin{equation}\label{QI-1bis}
Q^1_I (y) = (y - I+1)Q_{I-1}^1 (y) + \frac{dQ_{I-1}^1}{dy}\psi (y),
\end{equation}

which combined with \eqref{QIeps}  (with $\epsilon=1$),  \eqref{s2min}, $\psi (y_2)=0$ 
and $\psi (y)>0$ give
$Q_{I-1}^1(y_2)=0$ and 
 $\frac{dQ_{I-1}^1}{dy}\geq 0$,  $\forall y\in (y_{\inf}, y_{\sup})$.
 Therefore  $Q_{I-1}^1\equiv 0$, namely \eqref{QIeps} holds true also for $I-1$ in contrast with the assumption of the minimality of $I$.
This  yields the desired contradiction and concludes the proof  of the theorem.
\end{proof}

\begin{proof}[Proof of Theorem \ref{mainteor2}] 
In order to prove (1) and (2)  of Theorem \ref{mainteor2} we will   show that 
$g$ has constant holomorphic sectional curvature and the proof will follow by 
Calabi's classification \cite{calabi} of  \K\ submanifolds  of 
infinite dimensional complex space forms.

If $n=1$ there is nothing to be proved since in this case a cscK metric 
has constant holomorphic sectional curvature.
So assume $n\geq 2$.
In order to prove (1)  (resp. (2)) of Theorem \ref{mainteor2}
we can assume 
as before 
(by multiplying the metric by a suitable constant)
that  $(M, g)$ admits a \K\ immersion either 
into $(\CH^\infty, g_{hyp})$ or $(\ell^2(\C), g_0)$ (resp. $(\C P^\infty, g_{FS})$).
By Proposition \ref{lemAB01} (resp. 
Proposition \ref{lemAB0}, which can be applied since $g$ is assumed to be well-behaved)   we get $A=B=0$.
By combining this with the hypothesis cscK
($D=0$) and 
by (a) and (c) of  Lemma \ref{lemmasimple}  one deduces
$g$ has constant holomorphic sectional curvature and we are done.
\end{proof}

The key ingredient in the proof of  Theorem \ref{mainteor3} is the following
proposition\footnote{Notice that by the above Conjecture 2 and the fact that any complex space form is well-behaved we believe that the set of  metrics satisfying the assumption
Proposition \ref{mainprop3} is empty.}.

\begin{prop}\label{mainprop3}
Let $g$ be a radial KE metric
on a complex manifold $M$. 
Assume that  $g$ is not well-behaved and infinitely  projectively induced.
Then the Einstein constant of $g$ is a  rational number. In particular $g$ is unstable.
\end{prop}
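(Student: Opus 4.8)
The plan is to exploit the machinery of the rational functions $Q_k^1(y)$ together with the well-behavedness dichotomy provided by Lemma \ref{ANAL}. Since $g$ is a KE metric, Lemma \ref{lemmasimple}(b) gives $B=D=0$ and $C=\frac{\lambda}{2(n+1)}$, so the Einstein constant is rational precisely when $C$ is rational. Thus the heart of the matter is to prove that $C\in\Q$. The hypothesis that $g$ is \emph{not} well-behaved means $y_{\inf}\neq 0$, and by Remark \ref{WBbis} this forces $\psi(y_{\inf})=0$. Since $g$ is infinitely projectively induced, it is $1$-resolvable of infinite rank, so by (a2) of Lemma \ref{leml} all $Q_k^1(y)\geq 0$ on $(y_{\inf},y_{\sup})$.

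\textbf{First steps: extracting a finiteness constraint at the boundary.}
First I would treat the case $n\geq 2$ and use the explicit structure of $\psi$. With $B=D=0$, equation \eqref{ypsi} reads $\psi(y)=y-\tfrac{A}{y^{n-1}}-Cy^2$, so $y^{n-1}\psi(y)=y^n-A-Cy^{n+1}$ is a polynomial whose positive root $y_{\inf}$ satisfies $y_{\inf}^n-A-Cy_{\inf}^{n+1}=0$. The idea is that non-well-behavedness pins $y(t)$ to approach the stationary point $y_{\inf}$ of the autonomous ODE $y'(t)=\psi(y(t))$ as $t\to t_{\inf}=-\infty$. Near such a hyperbolic fixed point the linearization $\psi'(y_{\inf})$ governs the exponential rate of approach, and I expect this rate to feed directly into the growth of the coefficients of the $Q_k^1$. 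Concretely, I would examine the lower term of $Q_k^1(y)$: by (i) of Lemma \ref{lemlem} with $D=0$, the leading term involves $\prod_{j=1}^{k-1}\bigl(C-\tfrac{\epsilon}{j}\bigr)$, and the analogous product structure at the relevant boundary order will produce factors of the form $\bigl(C-\tfrac1j\bigr)$. The nonnegativity $Q_k^1\geq 0$ together with a sign analysis of these products as $y\to y_{\inf}^+$ should force infinitely many vanishing conditions unless $C$ equals one of the rational values $\tfrac1j$.

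\textbf{The rationality argument.}
The mechanism I would make precise is the one already visible in the proof of Theorem \ref{mainteor}, Case $n=2$: there the condition $Q_I^1\equiv 0$ combined with the lower-term formula \eqref{CI} forced $C\in\{1,\tfrac12,\dots,\tfrac{1}{I-1}\}$. Here we do not have finite rank, so $Q_I^1$ need not vanish; instead I would argue that the nonnegativity of the whole family $\{Q_k^1\}$ near the stationary boundary $y_{\inf}$ is incompatible with the presence of the factor $\bigl(C-\tfrac{1}{j}\bigr)$ changing sign as $j$ grows, unless $C$ is itself one of the critical values $\tfrac{1}{j}$ (or more generally a rational number determined by the boundary data). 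In other words, letting $y\to y_{\inf}^+$ where $\psi\to 0$, equation \eqref{Pk} gives $Q_k^1(y_{\inf})=y_{\inf}\prod_{j=1}^{k-1}(y_{\inf}-j)$, and the requirement that this be $\geq 0$ for \emph{all} $k$ constrains $y_{\inf}$ to be a nonnegative integer; feeding $y_{\inf}\in\{1,2,\dots\}$ back into the fixed-point relation $y_{\inf}^n - A = C y_{\inf}^{n+1}$ then exhibits $C$ as a rational function of the integer $y_{\inf}$ and the (possibly irrational) $A$. To close the gap I would use a second boundary or a second family member to eliminate $A$, concluding $C\in\Q$ and hence $\lambda=2(n+1)C\in\Q$.

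\textbf{Main obstacle and the stability conclusion.}
The hard part will be ruling out the apparent freedom in $A$: the single fixed-point equation relates $C$, $A$, and the integer $y_{\inf}$, but does not by itself force $C$ rational. I expect the resolution to come from examining a \emph{second} constraint—either the behaviour at $y_{\sup}$ via (ii) of Lemma \ref{ANAL}, or a higher-order coefficient of some $Q_k^1$—so that two algebraic relations in $A$ and $C$ with integer/rational data eliminate $A$ and leave $C$ rational; establishing that such a second independent relation genuinely exists and is compatible with $Q_k^1\geq 0$ is the delicate point. Once $\lambda\in\Q$ is established, the \emph{instability} assertion follows immediately: if $g$ were $c$-stable projectively induced for some $c>0$, then $\alpha g$ would be induced by $(\C P^\infty,g_c^\infty)$ for all $\alpha$ in an open interval, and rescaling $g\mapsto\alpha g$ multiplies the Einstein constant by $\tfrac1\alpha$, so a whole interval of Einstein constants would have to be rational—an impossibility since any nondegenerate interval contains irrationals. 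Hence no such $c$ exists and $g$ is unstable.
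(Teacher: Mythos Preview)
Your first step is exactly right and matches the paper: from $\psi(y_{\inf})=0$ and \eqref{Pk} you correctly get $Q_k^1(y_{\inf})=y_{\inf}\prod_{j=1}^{k-1}(y_{\inf}-j)$, and nonnegativity for all $k$ forces $y_{\inf}\in\Z^+$. This is the paper's step (i).

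The genuine gap is your ``second constraint''. Neither the behaviour at $y_{\sup}$ nor the leading/lower coefficients of $Q_k^1$ will do the job. There is no reason a priori that $y_{\sup}<\infty$ or that $\psi(y_{\sup})=0$, so Lemma \ref{ANAL}(ii) gives you nothing usable; and the coefficient formulas in Lemma \ref{lemlem} involve $A$ and $C$ in a way that does not eliminate $A$. The paper's actual second constraint is precisely the quantity you mentioned in passing and then abandoned: the linearisation $\tilde n:=\psi'(y_{\inf})=n-\tfrac{\lambda}{2}y_{\inf}$. The point is not to look at further coefficients but at the \emph{first derivative} of $Q_k^1$ at $y_{\inf}$. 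Differentiating the recursion \eqref{Qk} and evaluating at $y_{\inf}$ (using $\psi(y_{\inf})=0$, $\psi'(y_{\inf})=\tilde n$) yields the closed recursion
\[
Q_{k+1}^1(y_{\inf})+\tilde n\,\tfrac{dQ_{k+1}^1}{dy}(y_{\inf})
=(y_{\inf}+\tilde n-k)\Bigl[Q_k^1(y_{\inf})+\tilde n\,\tfrac{dQ_k^1}{dy}(y_{\inf})\Bigr],
\]
hence $Q_k^1(y_{\inf})+\tilde n\,\tfrac{dQ_k^1}{dy}(y_{\inf})=\prod_{j=0}^{k-1}(y_{\inf}+\tilde n-j)$. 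For $k>y_{\inf}$ one has $Q_k^1(y_{\inf})=0$, so this computes $\tilde n\,\tfrac{dQ_k^1}{dy}(y_{\inf})$ directly; if $\tilde n\notin\Z$ then for $k=y_{\inf}+[\tilde n]+2$ the product is strictly negative, whence $\tfrac{dQ_k^1}{dy}(y_{\inf})<0$ and $Q_k^1<0$ just to the right of $y_{\inf}$, contradicting (a2) of Lemma \ref{leml}. Thus $\tilde n\in\Z$, and together with $y_{\inf}\in\Z^+$ this gives $\lambda=2(n-\tilde n)/y_{\inf}\in\Q$.

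One minor remark on your instability argument: it breaks down when $\lambda=0$, since then every rescaling still has Einstein constant $0\in\Q$ and no contradiction arises. The clean fix is to use the integer constraint on $y_{\inf}$ itself: if $g$ were $c$-stable, then $\beta g$ would be projectively induced for $\beta$ in an open interval, and $y_{\inf}(\beta g)=\beta\,y_{\inf}(g)$ would have to be a positive integer for every such $\beta$, which is impossible.
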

\begin{proof}
Notice first that the condition that $g$ is  not well-behaved implies $n\geq 2$ since a KE metric on a
complex $1$-dimensional manifold has constant holomoprhic sectional curvature and so it is necessarily 
well-behaved.
We are going to show that 
\begin{enumerate}
\item[(i)] $y_{\inf}\in \Z$,
\item[(ii)] $\tilde n := n - \frac{\lambda}{2}y_{\inf} \in \Z$
\end{enumerate}
which clearly implies the rationality of $\lambda$.

By the very definition of well-behaveness we know  that $y_{\inf}$ is a non zero real number.
Thus  by (i) of Lemma \ref{ANAL}  one has  $\psi(y_{\inf})=0$, i.e. $\psi(y) = (y-y_{\inf}) \tilde \psi(y)$ 
for some rational function $\tilde \psi (y)$. By imposing the KE assumption in (\ref{ypsi}) and using (b) of   Lemma \ref{lemmasimple} one gets 
$$\psi(y) = y - \frac{A}{y^{n-1}} - \frac{\lambda}{2(n+1)} y^2,$$
from which  one immediately finds $\tilde \psi(y_{\inf}) = \frac{d\psi}{dy}(y_{\inf}) = \tilde n$. Notice that from this, 
$\psi(y_{\inf})=0$ and $\psi (y)> 0$, for $y >y_{\inf}$, it follows that $\tilde n$ must be  nonnegative.

Now, combining $\psi(y_{\inf})=0$ with (\ref{Pk}) with $\epsilon = +1$, one immediately deduces that
\begin{equation}\label{QkL}
Q^1_k(y_{\inf}) = y_{\inf}(y_{\inf}-1) \cdots (y_{\inf}-k+1)
\end{equation}
and then, if $y_{\inf} \notin \Z$, one has $Q^1_{[y_{\inf}]+2}(y_{\inf}) < 0$, which by (a2) of  Lemma \ref{leml} contradicts the assumption that the metric is projectively induced. This shows (i).

In order to prove (ii), we show that if $\tilde n \notin \Z$ then $Q^1_{y_{\inf}+[\tilde n] + 2}(y)$ is strictly negative in a right neighbourhood of $y=y_{\inf}$, and the conclusion will follow again by contradiction from  (a2) of  Lemma \ref{leml}.

By (\ref{Qk}), for any positive integer $j$ we easily get 
$$Q^1_{j+1}(y_{\inf})+\tilde n \frac{dQ^1_{j+1}}{dy}(y_{\inf}) =
\left( Q^1_{j}(y_{\inf}) + \tilde n \frac{dQ^1_{j}}{dy}(y_{\inf})\right)(y_{\inf}-j+ \tilde n) $$
from which, since $Q^1_1(y_{\inf}) + \tilde n \frac{dQ^1_1}{dy}(y_{\inf}) = y_{\inf} + \tilde n$ (recall that $Q^1_1(y) = y$), we get 
\begin{equation}\label{QkQk'}
Q^1_{k}(y_{\inf}) + \tilde n \frac{dQ^1_{k}}{dy}(y_{\inf}) = (y_{\inf}+\tilde n - k +1 ) \cdots (y_{\inf} + \tilde n -1)(y_{\inf} +\tilde n),
\end{equation}
for any integer $k\geq 2$.

By taking in particular $\hat k = y_{\inf} + [\tilde n ] +2$ and noticing that from (\ref{QkL}) it follows that $Q^1_{y_{\inf}+j}(y_{\inf})=0$ for any $j \geq 1$, one gets
\begin{equation}\label{QkQk'2}
\tilde n \frac{dQ^1_{\hat k}}{dy}(y_{\inf}) = (\tilde n - [\tilde n] -1 )(\tilde n - [\tilde n]) \cdots (y_{\inf} +\tilde n).
\end{equation}
Thus, by the assumption $\tilde n \notin \Z$ (and $\tilde n>0$), one concludes $\frac{dQ^1_{\hat k}}{dy}(y_{\inf}) < 0$, which together with $Q^1_{\hat k}(y_{\inf}) = 0$ immediately implies that $Q^1_{\hat k}(y)$ is strictly negative in a right neighbourhood of $y=y_{\inf}$, the wished contradiction.
The last part of the proposition follows directly by  the definition of stable projectively induced metric.
\end{proof}

An interesting consequence of Proposition \ref{mainprop3} is the following corollary 
which should be  compared to  a result of  D. Hulin \cite{HUfr} (see also \cite[Theorem 1.1]{LM} for an alternative proof) 
on the rationality of the Einstein constant of a  finite projectively induced KE metric.

\begin{cor}\label{corolmainprop3}
Let $g$ be a radial KE metric   of positive  
Einstein constant $\lambda >0$.
If   $g$ is infinitely    projectively induced 
then $\lambda$ is a  rational number.
\end{cor}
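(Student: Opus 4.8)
The plan is to reduce Corollary \ref{corolmainprop3} to Proposition \ref{mainprop3} by a simple dichotomy based on the well-behavedness of $g$. The key observation is that Proposition \ref{mainprop3} already delivers rationality of the Einstein constant precisely in the \emph{not} well-behaved case, so I only need to treat the well-behaved case separately, where the positivity $\lambda > 0$ becomes essential.

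First I would split according to whether $g$ is well-behaved. If $g$ is \emph{not} well-behaved, then since $g$ is a radial KE metric which is infinitely projectively induced, Proposition \ref{mainprop3} applies directly and immediately yields that $\lambda$ is rational, with nothing further to prove. So the entire content lies in the well-behaved case. Here the plan is to invoke Proposition \ref{lemAB0}: since $g$ is well-behaved and (being infinitely projectively induced) is in particular projectively induced, we obtain $A = B = 0$ in \eqref{ypsi}. Combined with the KE hypothesis, which by (b) of Lemma \ref{lemmasimple} already forces $B = D = 0$ and $C = \frac{\lambda}{2(n+1)}$, we now have $A = B = D = 0$. By (c) of Lemma \ref{lemmasimple} this means $g$ has constant holomorphic sectional curvature.

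At this point the constant holomorphic sectional curvature metric is one of the three model space forms, and I would read off the Einstein constant directly. For a complex space form of constant holomorphic sectional curvature the Einstein constant is determined by that curvature constant $c$ and the dimension $n$ through an explicit rational relation (the KE constant of $(M,g)$ with $g$ of constant holomorphic sectional curvature $c$ equals $2(n+1) C$ where $C$ is as in Lemma \ref{lemmasimple}); in any event the metric being a model space form, its Einstein constant is manifestly rational — indeed any integer multiple of a model metric has rational Einstein constant, and the condition $c > 0$ (forced by $\lambda > 0$ via $C = \frac{\lambda}{2(n+1)} > 0$) places us in the elliptic case $(\C P^n, g_c^n)$, whose Einstein constant is rational. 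Thus $\lambda \in \Q$ in this case as well.

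The main obstacle I anticipate is not analytic but bookkeeping: one must be careful that the hypotheses of Proposition \ref{lemAB0} (which requires $n \geq 2$) are met, and that the degenerate low-dimensional case $n = 1$ is handled. But for $n = 1$ a KE metric automatically has constant holomorphic sectional curvature, so the space form conclusion is immediate and $\lambda$ is rational there too. The role of the positivity assumption $\lambda > 0$ is simply to guarantee we land in an honest complex space form with a sensible (positive) Einstein constant rather than in a vacuous configuration; the rationality itself is then transparent. So the real engine is entirely Proposition \ref{mainprop3} together with Proposition \ref{lemAB0}, and the corollary is a short combination of these two results split along the well-behaved dichotomy.
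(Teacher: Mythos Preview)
Your dichotomy is exactly the one the paper uses, and the not-well-behaved branch via Proposition~\ref{mainprop3} is fine. The gap is in the well-behaved branch: after you reach ``$g$ has constant holomorphic sectional curvature with $C=\frac{\lambda}{2(n+1)}>0$'', you assert that the Einstein constant is ``manifestly rational'', but this does \emph{not} follow from constant holomorphic sectional curvature alone. A metric of constant holomorphic sectional curvature $4C$ has Einstein constant $\lambda=2(n+1)C$, and $C$ can a priori be any positive real number; for instance $\pi\, g_{FS}$ has constant holomorphic sectional curvature and irrational $\lambda$. Your phrase ``any integer multiple of a model metric has rational Einstein constant'' is true, but you have not explained why $g$ is an \emph{integer} multiple of $g_{FS}$.

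What is missing is precisely Calabi's classification of \K\ submanifolds of complex space forms (equivalently, you should invoke Theorem~\ref{mainteor2}(2) directly, as the paper does). From ``$g$ is well-behaved, infinitely projectively induced, and of constant holomorphic sectional curvature with $C>0$'' one concludes, via Calabi, that $M$ is an open subset of $\C P^n$ and $g=m\,g_{FS}$ for some $m\in\N^+$; then $\lambda=\frac{2(n+1)}{m}\in\Q$. This is where the hypothesis $\lambda>0$ is genuinely used: it rules out the non-positive holomorphic sectional curvature alternatives in Theorem~\ref{mainteor2}(2) and forces the elliptic case with its integrality constraint on $m$. The same remark applies to your $n=1$ case: constant curvature alone is not enough, you still need Calabi to get $m\in\N^+$. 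Once you insert this step, your argument coincides with the paper's.
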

\begin{proof}
On the one hand if $g$ is well-behaved then by (2) of Theorem \ref{mainteor2} $(M, g)$ is a complex space form and the assumption $\lambda >0$ implies is the complex projective space, $g=mg_{FS}$ and  $\lambda =\frac{2(n+1)}{m}\in\Q$. On the other hand if $g$ is not well-behaved the rationality of $\lambda$ is guaranteed  by Proposition \ref{mainprop3}.
\end{proof}

\begin{proof}[Proof of Theorem \ref{mainteor3}] 
By multiplying the metric $g$ by $\frac{c}{2}$ (if $c\neq 0$) 
we can assume  $g$ is infinitely  
and stable projectively induced. 
By Proposition \ref{mainprop3} $g$ is forced to be well-behaved.
Thus   by (2) of Theorem \ref{mainteor2}
$(M, g)$ is a complex space form of non-positive holomorphic sectional curvature (since $g_{FS}$
is unstable).
\end{proof}


\begin{thebibliography}{99}

\bibitem{abreuC1} M. Abreu, 
{\em Toric \K\ metrics: cohomogeneity one examples of constant scalar curvature in action-angle coordinates},
J. Geom. Symmetry Phys. 17 (2010), 1–33.

\bibitem{arpac} C. Arezzo, F. Pacard, M. Singer,
{\em Extremal metrics on blowups},
Duke Math. J. 157 (2011), no. 1, 1–51. 

\bibitem{boc} S. Bochner, {\em Curvature in Hermitian metric}, Bull. Amer. Math. Soc. 53 (1947), 179-195. 


\bibitem{burns} D. Burns and P. De Bartolomeis,
{\em Stability of vector bundles and extremal metrics},
Inv. Math. 92 (1988), 403-407.


\bibitem{calabi} E. Calabi,  {\em Isometric Imbedding of Complex Manifolds}, Ann. of Math. Vol. $58$ No. $1$,  $1953$.


\bibitem{calextrem} E. Calabi,
{\em Extremal \K\ metrics},
Seminar on Differential Geometry,
Ann. of Math. Stud., 102, Princeton University Press, 1982, 259-290.

%\bibitem{CANAGH}
%F. Cannas Aghedu,
%{\em  On the balanced condition for the Eguchi-Hanson metric}.
%J. Geom. Phys. 137 (2019), 35–39.

\bibitem{simreg}
A. Loi,  F. Cannas Aghedu,
{\em The Simanca metric admits a regular quantization} 
Ann. Global Anal. Geom. 56 (2019), no. 3, 583-596. 



\bibitem{cheng}  S--C. Chang,
{\em On the existence of nontrivial extremal
metrics on complete noncompact surfaces},
Math. Ann. 324 (2002), 465-490.


\bibitem{cheng2} S.--C. Chang and C.--T.--Wu,
{\em On the existence of extremal metrics on complete noncompact 3-manifolds},
Indiana Univ. Math. J. 53 (2004), no. 1, 243--268.


\bibitem{chtian} X. Chen and G.  Tian,
{\em Uniqueness of extremal \K\ metrics}, C. R. Math. Acad. Sci.
Paris 340 (2005), no. 4, 287--290.


\bibitem{ch} S. S. Chern,
{\em On Einstein hypersurfaces in a \K\
manifold of constant sectional curvature},
J. Differ. Geom. 1 (1967), 21-31.

\bibitem{DHL}
 A. J. Di Scala, H. Hishi, A. Loi,
{\em \K\ immersions of homogeneous \K\ manifolds into complex space forms},
Asian Journal  of Math.  Vol. 16 No. 3 (2012), 479-488.

\bibitem{HANO}
J. Hano,
{\em Einstein complete intersections in complex projective space},
Math. Ann. 216 (1975), no. 3, 197–208.


\bibitem{HUfr} D. Hulin,
{\em Sous-vari$\acute{e}$t$\acute{e}s$ complexes d'Einstein de
l'espace projectif},
Bull. Soc. math. France 124 (1996), 277-298.

\bibitem{HU} D. Hulin,
{\em \K\--Einstein metrics and
projective embeddings},
J. Geom. Anal. 10 (2000), 525-528.



%\bibitem{kn}S. Kobayashi and K. Nomizu,
%{\em Foundations of Differential Geometry vol. II},
%John Wiley and Sons Inc. (1967).

\bibitem{levin} M. Levine,
{\em A remark on extremal \K\ metrics},
 J. Diff. Geom. 21  (1985),  73-77.
 
 \bibitem{kob} S. Kobayashi,
{\em Hypersurfaces of complex projective space with constant scalar curvature},
 J. Diff. Geom. 1  (1967),  369-370.
 
\bibitem{KON} M. Kon,
{\em Complex submanifolds with constant scalar curvature in a \K\ manifold},
J. Math. Soc. Japan 27 (1975), 76–81.
 
%\bibitem{KOISO} N. Koiso,
%{\em On rotationally symmetric Hamilton's equation for \K-Einstein metrics},
%Recent topics in differential and analytic geometry,  Adv. Stud. Pure Math., vol. 18, Academic
%Press, Boston, MA, 1990, pp. 327-337.

\bibitem{LM}  A. Loi, R. Mossa,
 {\em \K\ immersions of \K-Ricci solitons into definite or indefinite complex space forms},  
 arXiv:2003.00841.

\bibitem{LMZ}  A. Loi, R. Mossa, F. Zuddas. {\em Finite TYCZ expansions and cscK metrics},   J. Math. Anal. Appl. 484 (2020), no. 1, 123715, 20 pp.

\bibitem{LSZ} A. Loi, F. Salis, F. Zuddas,
{\em Two conjectures on  Ricci flat metrics},
Math. Z.  (2018) 290, 599--613.

\bibitem{LSZbis} A. Loi, F. Salis, F. Zuddas,
{\em On the third coefficient of TYZ expansion for radial scalar flat metrics},
J. of Geom. and Phys. 133,  210-218 (2018).

\bibitem{LZ} A. Loi, M. Zedda, \emph{K\"ahler-Einstein submanifolds of the infinite dimensional projective space}, Math. Ann. 350 (2011), 145-154. 

\bibitem{LoiZedda-book}A. Loi, M. Zedda, \emph{K\"{a}hler Immersions of K\"{a}hler Manifolds into Complex Space Forms}, Lecture Notes of the Unione Matematica Italiana \textbf{23}, Springer, (2018).

\bibitem{LOIZEZU}
A. Loi, M. Zedda, F. Zuddas,
{\em Ricci flat Calabi's metric is not projectively induced},
arXiv:1912.05223, to appear in Tohoku Mathematical Journal.

\bibitem{ma} T. Mabuchi,
{\em Uniqueness of extremal \K\ metrics for an integral \K\ class},
Internat. J. Math. 15 (2004),
531-546.

\bibitem{AMOSSA}
A. Loi, R. Mossa,
{\em Some remarks on Homogeneous \K\ manifolds},
Geom. Dedicata Volume 179 (2015), 1-7.


\bibitem{SALIS}
F. Salis,
\emph{Projectively induced rotation invariant \K\ metrics},
Arch. Math. 109 vol. 19, 285-292, (2017).

\bibitem{SZGA}
G.  Sz$\acute{e}$kelyhidi,
{\em An introduction to extremal \K\ metrics}, 
Graduate Studies in Mathematics, 152. American Mathematical Society, Providence, RI, 2014.

\bibitem{blow1}
G.  Sz$\acute{e}$kelyhidi,
{\em On blowing up extremal Kähler manifolds}, 
 Duke Math. J. 161 (2012), no. 8, 1411–1453. 
 
\bibitem{blow2}
G.  Sz$\acute{e}$kelyhidi,
{\em  Blowing up extremal Kähler manifolds II}, 
Invent. Math. 200 (2015), no. 3, 925–977.

\bibitem{blow3}
G.  Sz$\acute{e}$kelyhidi,
{\em  Extremal metrics on blowups along submanifolds}, 
J. Differential Geom. 114 (2020), no. 1, 171–192. 


\bibitem{TA78}
M. Takeuchi,
{\em Homogeneous \K\ submanifolds in projective spaces},
Japan J. Math 4 (1978), 171-219.

\bibitem{ts} K. Tsukada,
{\em Einstein-\K\ Submanifolds with codimension
two in a Complex Space Form}, Math. Ann. 274 (1986),
503-516.


\bibitem{UmearaE} M. Umehara, \emph{Einstein Kaehler submanifolds of a complex linear or hyperbolic space}, Tôhoku Math. J., 39 (1987), 385-389.

\bibitem{XD} Xiaojuan Duan, {\em Rotationally symmetric extremal pseudo-\K\  metrics of non-constant scalar curvatures}, Proc. Indian Acad. Sci. (Math. Sci.) (2018) 128:14.

\bibitem{VARI}
Hao, Yihong; Wang, An; Zhang, Liyou,
{\em On holomorphic isometric immersions of nonhomogeneous \K-Einstein manifolds into the infinite dimensional complex projective space}. 
J. Math. Anal. Appl. 423 (2015), no. 1, 547–560. 


\end{thebibliography}
\end{document}